\documentclass[
12pt,  reqno]{amsart}
\pdfoutput=1
\linespread{1.05}
\usepackage[margin=1.2in,marginparwidth=1.5cm, marginparsep=0.5cm]{geometry}
\usepackage{mathtools}


\setlength{\pdfpagewidth}{8.50in}
\setlength{\pdfpageheight}{11.00in}

\usepackage{booktabs} 
\usepackage{microtype}
\usepackage{amssymb}
\usepackage{mathrsfs}

\usepackage{xcolor}
\usepackage[implicit=true]{hyperref}

\usepackage{soul}
\usepackage{yhmath}

\usepackage{xsavebox}

\usepackage{amsrefs}

\usepackage{cases}

\allowdisplaybreaks[2]

\sloppy

\hfuzz  = 0.5cm 

\newtheorem{theorem}{Theorem} [section]

\newtheorem{lemma}[theorem]{Lemma}
\newtheorem{proposition}[theorem]{Proposition}
\newtheorem{remark}[theorem]{Remark}

\newtheorem{definition}[theorem]{Definition}


\DeclareMathOperator{\Law}{Law}

\newcommand{\I}{\mathcal{I}}

\newcommand{\Ha}{\mathbb{H}_a}

\newcommand{\noi}{\noindent}
\newcommand{\Z}{\mathbb{Z}}
\newcommand{\R}{\mathbb{R}}

\newcommand{\T}{\mathbb{T}}

\newcommand{\dr}{\theta}
\newcommand{\Dr}{\Theta}

\newcommand{\W}{\mathcal{W}}

\let\P= \undefined
\newcommand{\P}{\mathbf{P}}

\newcommand{\E}{\mathbb{E}}

\newcommand{\F}{\mathcal{F}}

\newcommand{\al}{\alpha}
\newcommand{\be}{\beta}
\newcommand{\dl}{\delta}

\newcommand{\nb}{\nabla}

\newcommand{\Dl}{\Delta}
\newcommand{\eps}{\varepsilon}

\newcommand{\g}{\gamma}

\newcommand{\s}{\sigma}

\newcommand{\ft}{\widehat}

\newcommand{\cj}{\overline}

\newcommand{\dt}{\partial_t}
\newcommand{\dd}{\partial}

\renewcommand{\l}{\ell}
\renewcommand{\o}{\omega}
\renewcommand{\O}{\Omega}

\newcommand{\les}{\lesssim}

\newcommand{\jb}[1]
{\langle #1 \rangle}

\newcommand{\N}{\mathbb{N}}

\newcommand{\Pkn}{P^{(2m+2)}_{\al, N}}
\renewcommand{\H}{\mathcal{H}}

\newtheorem*{ackno}{Acknowledgements}

\numberwithin{equation}{section}
\numberwithin{theorem}{section}

\begin{document}

\title[Invariant Gibbs dynamics for fractional NLW]{Invariant Gibbs dynamics for two-dimensional fractional wave equations in negative Sobolev spaces}

\author[L. Forcella and O. Pocovnicu
]{Luigi Forcella and Oana Pocovnicu}

\address{Luigi Forcella\\Department of Mathematics, University of Pisa, Largo Bruno Pontecorvo 5, 56127, Pisa, Italy}
\email{luigi.forcella@unipi.it}

\address{Oana Pocovnicu, Department of Mathematics, Heriot-Watt University and The Maxwell Institute for the Mathematical Sciences, Edinburgh, EH14 4AS, United Kingdom}

\email{o.pocovnicu@hw.ac.uk}

\subjclass[2010]{35L71, 60H15}

\keywords{ Nonlinear wave equation; dispersion generalized nonlinear wave equation; Gibbs measure; Wick ordering.}

\begin{abstract}
We consider a fractional nonlinear wave equation (fNLW) with a general odd power-type nonlinearity, on the two-dimensional torus. Our main goal is to construct invariant global-in-time Gibbs dynamics for a renormalized version of fNLW. We first construct the renormalized Gibbs measure associated with the renormalized equation by using the variational approach of Barashkov and Gubinelli. We then prove almost sure local well-posedness with respect to Gibbsian initial data, by exploiting the second order expansion. Finally, we extend solutions globally in time using Bourgain's invariant measure and approximation argument. 
\end{abstract}
	
\maketitle

\section{Introduction}	\setcounter{equation}{0}
We consider 
the defocusing fractional nonlinear wave equations
 (fNLW) posed on the two-dimensional flat torus:
\begin{align}
\begin{cases}
\dt^2 u +(1 -\Dl)^\alpha  u +  u^{2m+1} = 0 \\
(u, \dt u) |_{t = 0} = (\phi_0,  \phi_1), 
\end{cases}
\qquad (t,x) \in \R \times \T^2,
\label{NLW}
\end{align}

\noi
where $\al\in(0,1),$ 
$m \in \N $ is a positive integer, and $\T^2 = (\R/\Z)^2$.
Despite the presence of the mass term, we refer to \eqref{NLW}
as fNLW,  though it is usually referred  to as the nonlinear Klein-Gordon equation.
We restrict our attention to the case of real-valued solutions $u:\R\times\T^2\mapsto \R$.\medskip

The purpose of this paper is twofold. First, we are interested in the construction of a (invariant) Gibbs measure for a renormalized version of equation \eqref{NLW}. Secondly, we prove almost sure global well-posedness of the renormalized fNLW with initial data distributed according to the Gibbs measure, in Sobolev spaces of negative regularity. 
The a.s. global well-posedness is a consequence of an a.s. local well-posedness theory, followed by the application of Bourgain's invariant  measure and approximation argument. 
Hence, in the second part of the paper we will focus on the almost sure local well-posedness theory. 

Both the construction of renormalized Gibbs measures and the local well-posedness of the renormalized fNLW equation, strongly depend on the value $\al$ of the fractional power of the  elliptic operator $1-\Delta$ appearing in \eqref{NLW}.  
\subsection{Renormalized Gibbs measures}
With the notation $v = \dt u$, 
we can rewrite the equation \eqref{NLW} 
as the following Hamiltonian evolution:
\begin{equation}\label{ham-form}
 \dt 
 \begin{pmatrix}
 u \\ v 
 \end{pmatrix}
 =  
 \begin{pmatrix}
 0& 1 \\ -1 & 0
 \end{pmatrix}
\frac{\dd H_{\al}}{\dd(u, v )}, 
\end{equation}

\noi
associated to the energy $H_{\al} = H_{\al}(u, v)$ defined by 
\begin{align}
H_{\al}(u, v): 
 = \frac{1}{2}\left(\|u\|_{H^{\al}_x(\T^2)}^2+\|v\|_{L^2_x(\T^2)}^2\right)
+ \frac1{2m + 2} \|u\|^{2m+2}_{L^{2m+2}_x(\T^2)}.
\label{Hamil}
\end{align}

\noi In the case of a Hamiltonian system on $\R^{2n}$,
the corresponding Gibbs measure is invariant. 
This follows from (i) the conservation of the energy/Hamiltonian and (ii) from the invariance of the finite dimensional Lebesgue measure under the flow of a Hamiltonian system on $\R^{2n}$ (which is a consequence of Liouville's theorem). Mimicking the picture of the finite dimensional setting, one is tempted to define in a similar way the Gibbs measure for \eqref{NLW}:
\begin{align}
dP^{2m+2}_{\alpha,2}``= "Z^{-1} \exp(- \be H_{\al}(u,v ))\,du\otimes dv.
\label{G1}
 \end{align}
 
\noi 
However, \eqref{G1} is merely a formal expression and we need to rigorously construct a Gibbs measure for (a renormalized version of) fNLW; this is one of the main tasks of this article. As in the finite dimensional setting, we also would like to have invariance of the (renormalized) Gibbs measure under the dynamics of (the renormalized version) fNLW. In \eqref{G1}, and in similar expression along the article, $Z$,  $Z_0$, $Z_1$ etc. denote various normalizing constants
so that the corresponding measures are probability measures. The parameter $\be$ stands for the reciprocal temperature, and we fix $\be=1$.  All the results in this paper hold, in fact, for any $\be > 0$ and the resulting (renormalized) Gibbs measures are mutually singular for different values of $\be > 0$, see~\cite{OQ}. 

From \eqref{Hamil}, we can rewrite the formal expression~\eqref{G1}
as 
\begin{align}
dP^{2m+2}_{\al,2}
& ``= "Z^{-1} 
e^{-\frac 1{2m+2} \int u^{2m+2} \,dx} 
e^{-\frac 12 \int |(1-\Dl)^{\al/2} u|^2 \,dx} du\otimes e^{-\frac 12 \int v^2\,dx} dv \notag\\
& \sim e^{-\frac 1{2m+2} \int u^{2m+2}\,dx} d\vec \mu_{\al} , 
\label{G2}
 \end{align}

\noi
where $\vec\mu_{\al}$ is  the Gaussian measure  on $\mathcal{D}'(\T^2)\times \mathcal{D}'(\T^2)$  with  density 
\begin{align}
 d\vec \mu_{\al}
  =  Z^{-1} e^{-\frac 12 \int|(1-\Dl)^{\al/2} u|^2\, dx}\,du\otimes e^{-\frac 12 \int v^2\,dx}\,dv.
\label{G3}
\end{align}

\noi
Note that $\vec \mu_{\al}$ has a tensorial structure $\vec \mu_{\al} = \mu_{\al} \otimes \nu$, where
$\mu_{\al}$ and~$\nu$ are given by 
\begin{align}
 d\mu_{\al} =  Z_0^{-1} e^{-\frac 12 \int |(1-\Dl)^{\al/2} u|^2\,dx}\,du
 \qquad \text{and}\qquad 
 d\nu =  Z_1^{-1} e^{-\frac 12 \int v^2\,dx}\,dv
\label{G4}
\end{align}

\noi
Namely, $\mu_\al$ is the Ornstein-Uhlenbeck measure
and $\nu$ is the white noise measure on $\T^2$.
When $\al=1$, $\mu_1$ corresponds to the massive Gaussian free field. 
It is well known that $\mu_1$ is supported on 
the $L^2$-based Sobolev space $H^s(\T^2)$, $s<0$.
Note also that 
$\vec\mu_{\al}$ is the induced probability measure
under the map
\begin{align}
 \o \in \O \longmapsto (u^\o, v^\o)(x) 
 = \bigg( \sum_{n \in \Z^2} \frac{g_{0, n}(\o)}{\jb{n}^{\al}}e^{in\cdot x},
\sum_{n \in \Z^2} g_{1, n}(\o)e^{in\cdot x}\bigg), 
\label{G5}
 \end{align}

\noi
where 
$\jb{n} = \sqrt{1+|n|^2}$,  and 
$\{g_{0, n}, g_{1, n}\}_{n \in \Z^2}$
is a sequence of independent standard complex-valued Gaussian
random variables on a probability space $(\O, \F, P)$
satisfying the condition  ${g_{j, -n} = \cj{g_{j, n}}}$ for  $j = 0, 1$.
More precisely, with the notation $\N=\{1,2,3,\dots\}$, we first define an index set $\Lambda$ by
\[\Lambda:=(\Z\times \N) \cup (\N\times\{0\})\cup \{(0,0)\}.\]
We then define $\{g_{0,n},g_{1,n}\}_{n\in\Lambda}$ 
to be a family of independent standard Gaussian random variables which are complex-valued for $n\neq (0,0)$ and are real-valued for $n=(0,0)$. 
We finally set $g_{j,n}=\cj{g_{j,-n}}$ for $n\in\Z^2\setminus \Lambda$ and $j = 0, 1$.

From \eqref{G5}, we see that $\vec \mu_{\al}$ is supported
on \[\H^s_\al(\T^2) = \H^s (\T^2) : = H^s(\T^2)\times H^{s-\al}(\T^2) , \quad s < \al-1.\]  
The claim on the support of the measure essentially follows by calculating 
\[\E (\|(u^\o, v^\o)\|_{\H^s}^2)\sim \sum_{n\in\Z^2}\jb{n}^{2(s-\al)}\]
and by noting that the series on the right hand-side is convergent provided that $s<\al-1$.
Moreover, we have that $\vec\mu_\al (\H^{\al-1})=0$. 
Since $\al<1$, this implies that $\int_{\T^2} (u^{\o})^2\, dx = \infty$, and so
$\int_{\T^2} (u^\o)^{2m+2}\, dx = \infty$,
$\vec\mu_{\al}$-almost surely.
Consequently, the right-hand side of \eqref{G2} cannot define a probability measure.
We are therefore forced to {\it renormalize} the potential part of the energy.
In the 2D setting, it is known that a Wick ordering suffices for this purpose.
We refer the reader to \cites{Simon,GJ, OT2018, OT2020}. \smallskip

Let us briefly go over  the Wick renormalization on $\T^2$.
See~\cite{DPT} for more details on  $\T^2$, which is the  framework of the present article.
For a typical element $u$ under  $\mu_{\al}$ defined in~\eqref{G4},  since $u \notin L^2(\T^2)$ almost surely, we have that 
\[ \int_{\T^2} u^2\,dx = \lim_{N \to \infty} \int_{\T^2} (\P_Nu)^2\,dx = \infty\]
almost surely,
where 
 $\P_Nu$ is the Fourier projector onto  frequencies at most $N$, namely 
 $\P_Nf:=\sum_{n \in \Z^2, |n|\leq N}\ft f(n)e^{in\cdot x}$. \smallskip
Moreover, we observe that for each $x \in \T^2$,  
 $\P_N u(x)$ 
 is a mean-zero real-valued Gaussian random variable with variance 
 \begin{align}
\s_{\al,N}:= \E [ (\P_N u)^2(x)]
= \sum_{\substack{n \in\Z^2\\ |n|\leq N}}\frac{1}{{\jb n}^{2\al}}.
\label{G6}
\end{align}
Thus $\s_{\al,N}$ diverges as $N^{2-2\al}$ as $N\to\infty$. Observe that the quantity $\s_{\al, N}$ introduced in  \eqref{G6} is independent of $x \in \T^2$.
One then defines the Wick ordered monomial
 $:\! (\P_N u)^k  \!:$ by
\begin{align}
:\! (\P_N u)^k (x)\!:  \, := H_k(\P_N u(x); \s_{\al,N}),
\label{Wick1}
\end{align}

\noi where  $H_k(x; \s)$ is the Hermite polynomial of degree $k$
defined via the generating function
\begin{equation*}
F(t, x; \s) : =  e^{tx - \frac{1}{2}\s t^2} = \sum_{k = 0}^\infty \frac{t^k}{k!} H_k(x;\s).
 \end{equation*}
	
\noi
Here we set 
$H_k(x) : = H_k(x; 1)$.
A list of the first  Hermite polynomials is given for the sake of clarity
and for readers' convenience:
\begin{align}
\begin{split}
& H_0(x; \s) = 1, 
\qquad 
H_1(x; \s) = x, 
\qquad
H_2(x; \s) = x^2 - \s, \\
& H_3(x; \s) = x^3 - 3\s x, 
\qquad 
H_4(x; \s) = x^4 - 6\s x^2 +3\s^2.
\end{split}
\label{H1a}
\end{align}
Furthermore, we report some useful identities:
\begin{equation}\label{eq:hermite:hom}
H_k(\s^{1/2}x;\s)=\s^{k/2}H_k(x),
\end{equation}
\begin{align}
H_k(x+y; \s )
&  = 
\sum_{\l = 0}^k
\begin{pmatrix}
k \\ \l
\end{pmatrix}
 x^{k - \l} H_l(y; \s),
\label{Herm:sum}
\end{align}
and
\begin{equation}\label{partial-herm}
\partial_x H_k(x; \s) = k H_{k-1}(x; \s).
\end{equation}


Having defined 
$:\! (\P_N u)^{2m+2} (x)\!: \, = H_{2m+2}(\P_N u(x); \s_{\al,N})$, we introduce
$G_N(u)$ and $R_N(u)$ defined by
\begin{equation}\label{eq:def:GN}
G_N(u):= \int_{\T^2} :\! (\P_N u)^{2m+2}\! : dx
\end{equation}
and 
\begin{equation}\label{eq:def:RN}
R_N(u):=e^{-\frac 1{2m+2} \int_{\T^2} :  (\P_N u)^{2m+2}  : \, dx} =e^{-\frac {1}{2m+2}G_N(u)}.
\end{equation}

\noi
Then, the following proposition allows us to define $\int_{\T^2} :\! u^{2m+2} \!:dx$.
\begin{proposition}\label{prop:conv:G}
Let $m \in \N$ and let $\al\in\left(1- \frac{1}{2m+2},1\right)$. Let $1\leq p\leq \infty$. 
Let $u=u^\omega$ be a typical element under the measure $\mu_{\al}$ defined in~\eqref{G4}.
The sequence $\{G_N(u)\}_{N\in\N}$ is a Cauchy sequence in $L^p(\mu_{\al})$. More specifically, there exists a constant $c=c(m)$ such that for any $1\leq N\leq M$ we have
\begin{equation}\label{eq:cauchy:G}
\|G_M(u)-G_N(u)\|_{L^p(\mu_\al)} \leq c(m)(p-1)^{m+1} N^{ 1-2\al+\frac{m}{m+1}}.\end{equation}
We therefore define 
\begin{equation*}
G(u)=\int_{\T^2} :\,\! u^{2m+2} \!\,:dx:\,=L^p\hbox{-}\lim_{N\to\infty}\int_{\T^2} :\! (\P_N u)^{2m+2} \! : \, dx.
\end{equation*}
\end{proposition}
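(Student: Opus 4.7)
The strategy is to exploit the Wiener chaos structure of the Wick monomial and reduce matters to a deterministic Fourier sum estimate. Expanding $\P_N u$ according to \eqref{G5} and applying the identity relating Hermite polynomials to Wick products of independent Gaussians, one writes
\[
G_N(u) = \sum_{\substack{n_1+\cdots+n_{2m+2}=0 \\ |n_j| \leq N}} \Bigg(\prod_{j=1}^{2m+2} \frac{1}{\jb{n_j}^\al}\Bigg) :\!g_{0,n_1}\cdots g_{0,n_{2m+2}}\!:
\]
so that $G_N$, and in particular $G_M - G_N$, belongs to the $(2m+2)$-th homogeneous Wiener chaos $\mathcal{H}_{2m+2}$ built from the Gaussian family $\{g_{0,n}\}_{n\in\Z^2}$.

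Next, I would invoke the Wiener chaos estimate (Nelson's hypercontractivity): for any $F\in\mathcal{H}_k$ and any $p\geq 2$, $\|F\|_{L^p(\mu_\al)} \leq (p-1)^{k/2}\|F\|_{L^2(\mu_\al)}$, while the range $1\leq p<2$ is absorbed by the monotonicity of $L^p$-norms on a probability space. With $k=2m+2$, this provides the factor $(p-1)^{m+1}$ in \eqref{eq:cauchy:G} and reduces the statement to an $L^2$ bound. By the Wick pairing formula (Isserlis theorem applied to two Wick monomials of equal degree), one obtains the explicit expression
\[
\|G_M-G_N\|_{L^2(\mu_\al)}^2 = (2m+2)!\sum_{\substack{n_1+\cdots+n_{2m+2}=0 \\ \max_j|n_j|>N,\; |n_j|\leq M}}\prod_{j=1}^{2m+2}\frac{1}{\jb{n_j}^{2\al}}.
\]

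By symmetry (pinning one index as the maximal one) and Parseval, this sum is dominated by a constant multiple of $\int_{\T^2} h_N(x)\, f(x)^{2m+1}\,dx$, where $f(x):=\sum_n \jb{n}^{-2\al} e^{in\cdot x}$ and $h_N(x):=\sum_{|n|>N} \jb{n}^{-2\al} e^{in\cdot x}$. Since $f(x)\sim |x|^{-(2-2\al)}$ near the origin of $\T^2$, one has $f\in L^{2m+2}(\T^2)$ precisely under the standing hypothesis $\al>1-\frac{1}{2m+2}$. H\"older's inequality yields $\int h_N f^{2m+1} \leq \|h_N\|_{L^r}\|f\|_{L^{(2m+1)r'}}^{2m+1}$, with the dual pair chosen so that $1/r = \al - m/(m+1)$; one checks that this saturates the critical integrability of $f$ exactly under the assumption $\al>(2m+1)/(2m+2)$. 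A Bernstein-type interpolation for $h_N$ between $\|h_N\|_{L^2}^2\sim N^{2-4\al}$ and $\|h_N\|_{L^\infty}\lesssim N^{2-2\al}$ then produces $\|h_N\|_{L^r} \lesssim N^{2-4\al+2m/(m+1)}$, yielding the announced rate $N^{2(1-2\al+m/(m+1))}$ for $\|G_M-G_N\|_{L^2}^2$, and hence \eqref{eq:cauchy:G}. The existence of the limit $G(u)$ then follows from the completeness of $L^p(\mu_\al)$.

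\textbf{Main obstacle.} The hard part is the final Fourier sum estimate, where both $f$ and the H\"older exponent are at criticality as $\al\searrow (2m+1)/(2m+2)$: $f$ is borderline in $L^{2m+2}$, and $r$ must be chosen precisely to balance the integrability of $f$ against the Bernstein interpolation of $h_N$. Any slack in H\"older translates into a weaker (possibly non-negative) decay in $N$, so the matching of exponents at criticality must be carried out carefully.
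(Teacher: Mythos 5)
Your overall architecture coincides with the paper's: reduce to $p=2$ via the Wiener chaos estimate (this is exactly how the factor $(p-1)^{m+1}$ arises there as well), compute $\|G_M-G_N\|_{L^2(\mu_\al)}^2$ by orthogonality of Wick products, and reduce everything to the deterministic quantity $\int_{\T^2} h_N\, f^{2m+1}\,dx$ --- which is precisely the paper's $\int_{\T^2}\big(\g_{\al,M}^{2m+2}-\g_{\al,N}^{2m+2}\big)\,dx$ after the elementary difference bound. Up to that point everything is sound, and your H\"older split $1/r=\al-\tfrac{m}{m+1}$ is a legitimate (in fact slightly sharper) alternative to the paper's $(2m+2,\tfrac{2m+2}{2m+1})$ split.

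The broken step is the claimed bound $\|h_N\|_{L^\infty}\lesssim N^{2-2\al}$. The function $h_N=\sum_{|n|>N}\jb{n}^{-2\al}e^{in\cdot x}$ is the high-frequency tail of the Green's function $f$ of $(1-\Dl)^{\al}$; since $f(x)\sim|x|^{-(2-2\al)}$ near $x=0$ and the low-frequency part $f-h_N$ is a bounded trigonometric polynomial, $h_N$ inherits the full singularity of $f$ at the origin, so $\|h_N\|_{L^\infty}=\infty$ for every $N$ (equivalently, $h_N(0)=\sum_{|n|>N}\jb{n}^{-2\al}$ diverges in dimension two for $\al<1$). The $L^2$--$L^\infty$ interpolation therefore cannot be run as written. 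The conclusion $\|h_N\|_{L^r}\lesssim N^{2-4\al+2m/(m+1)}$ is nevertheless correct and is recovered by Hausdorff--Young: since $r>2$ in your range (indeed $1/r=\al-\tfrac{m}{m+1}<\tfrac{1}{m+1}\le\tfrac12$), one has $\|h_N\|_{L^r}\le\big(\sum_{|n|>N}\jb{n}^{-2\al r'}\big)^{1/r'}\sim N^{2/r'-2\al}$, the sum converging exactly when $\al r'>1$, i.e. $\al>\tfrac{2m+1}{2m+2}$, and $2/r'-2\al=2-4\al+\tfrac{2m}{m+1}$ as you claim. This is essentially the route the paper takes (Hausdorff--Young for $\|\g_{\al,N}\|_{L^{2m+2}}$, Sobolev embedding for the difference). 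With this repair your argument closes, and it actually delivers $\|G_M-G_N\|_{L^2}\lesssim N^{1-2\al+m/(m+1)}$, i.e. the rate stated in \eqref{eq:cauchy:G} for the norm itself, whereas the paper's exponent choice yields this rate only for the square of the norm.
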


Note that if $\al>1-\frac{1}{2m+2}$, the exponent of $N$ in \eqref{eq:cauchy:G} is negative, hence $\{G_N(u)\}_{N\in\N}$ is indeed a Cauchy sequence. \\

The next theorem is the core result for the construction of the renormalized Gibbs measure. 

\begin{theorem}\label{thm:exp:int}
Let $m \in \N$ and let $\al\in\left(1- \frac{1}{2m+2},1\right).$ Let $1\leq p\leq \infty$. 
Let $u=u^\omega$ be a typical element under the measure $\mu_{\al}$ defined in~\eqref{G4}.
Then 
$\{R_N(u)\}_{N\in\N}$ is uniformly bounded in $L^p(\mu_\al)$. 
Moreover, $R_N(u)$ convergence in $L^p(\mu_\al)$ to
\[R(u):\, = e^{-\frac{1}{2m+2}\int_{\T^2} :\,\! u^{2m+2} \!\,:\,dx:}.\]
\end{theorem}

With the above theorem and with the intuition gained from \eqref{G2}, we can rigorously define the renormalized Gibbs measure  associated to the `Wick ordered Hamiltonian'
\begin{align}
H^{W}_{\al}(u, v) &: = \frac{1}{2}\int_{\T^2}\,|\left(1-\Dl \right)^{\al/2} u|^2\, dx
+ 
\frac{1}{2}\int_{\T^2} v^2dx
+ \frac1{2m + 2} \int_{\T^2} :\! u^{2m+2} \!:  dx
\label{ham-wick}
\end{align}
by
\[
dP^{2m+2}_{\al,2} : \,
=Z^{-1}e^{-H^W_\al(u,v)}du \otimes dv
= Z^{-1} 
 e^{-\frac 1{2m+2} \int : \, u^{2m+2}\,: \,dx}\, d\vec \mu_{\al}.
\]
\noi
$P^{2m+2}_{\al,2}$ is a probability measure on $\mathcal H^s(\T^2)\setminus \mathcal H^{\al-1}(\T^2)$, $s<\al-1$. (Note that, so far, $H^{W}_{\al}(u, v)$ was only defined for $u$ a typical element in the support of $\mu_\al$. )

The proof of Proposition \ref{prop:conv:G} and that of Theorem \ref{thm:exp:int} are the main contents of Section \ref{sec:variational}.

\begin{remark}\rm
In the case $m=1$ of the cubic fNLW, the result in Theorem \ref{thm:exp:int} as well is its proof overlap with a result by by Sun, Tzvetkov, and Xu from \cite{STX}. We proved the result above independently, at the same time. Additionally, it is worth mentioning a few differences between \cite{STX} and this paper.\smallskip

\noi
\textup{(i)} Our goal in this paper is different from the goal in \cite{STX}. Namely, the aim of \cite{STX}  is to prove the weak universality of the 2D (renormalized) cubic  fNLW (both the convergence of invariant measures and the dynamical weak universality). In this paper, we consider a 2D (renormalized)  fNLW with a general odd power-type nonlinearity, we construct the renormalized Gibbs measure, and we prove its almost sure global well-posedness with respect to  Gibbsian initial data (see Theorem \ref{THM:GWP} and Theorem \ref{THM:GWP:sec-ord} below).

\smallskip
\noi
\textup{(ii)}  In \cite{STX}, the authors consider fNLW with a nonlinearity given by a polynomial with prescribed properties; more precisely, the nonlinearity is a linear combination of cubic and higher powers which, roughly speaking, scale as the cubic power. In particular, from the point of view of the measure construction, the nonlinearity in \cite{STX} is essentially of cubic type. Consequently, the authors find a lower bound $\al>\frac34$ for the construction of the renormalized Gibbs measure which is independent of the degree of the nonlinearity. In our paper, we consider general defocusing power-type nonlinearities $u^{2m+1}$, and thus the restriction we have on $\al$ for the construction of the Gibbs measure does depend on the power of the nonlinearity, namely on $m$. See the condition $\al>1-\frac{1}{2m+2}$ in Proposition \ref{prop:conv:G} and Theorem \ref{thm:exp:int}. This restriction on $\alpha$ coincides with that in \cite{STX} when $m=1$ (cubic nonlinearity).  
\end{remark}

\subsection{Solution theory for renormalized fNLW} 
We now introduce a notion of solution for the Cauchy problem  with random initial data. 

\subsubsection{First order expansion}

We first recall the integral formulation of the fractional NLW equation, i.e. we formally write a solution to \eqref{NLW} as
\[
u(t) = 
\cos (t \jb{\nb}^\al) \phi_0
+ \frac{\sin (t \jb{\nb}^\al)}{\jb{\nb}^\al}   \phi_1+\int_0^t \frac{\sin ((t - t') \jb{\nb}^\al)}{\jb{\nb}^\al} 
u^{2m+1} dt'.
\]
The formulation above is the well-known Duhamel formulation of \eqref{NLW}, and we further denote by $\I$ the Duhamel operator 
\begin{equation}\label{duha-oper}
\I(F) :=\int_0^t \frac{\sin ((t - t') \jb{\nb}^\al)}{\jb{\nb}^\al} 
F(t') dt'.
\end{equation}
By also denoting by $z$ the solution to the linear fractional wave equation with intial condition $(\phi_0,\phi_1)$:
\begin{equation}\label{lin1}
z(t)=S(t)(\phi_0,\phi_1):=\cos (t \jb{\nb}^\al) \phi_0
+ \frac{\sin (t \jb{\nb}^\al)}{\jb{\nb}^\al}   \phi_1,
\end{equation}
we see that Duhamel's formula states that 
solutions to \eqref{NLW} are written $u=z+\I(u^{2m+1})$.

 From Proposition \ref{prop:conv:G} and Theorem \ref{thm:exp:int}, we can consider the Wick ordered energy defined in \eqref{ham-wick}
and, analogously to \eqref{ham-form}, we can consider the Hamiltonian formulation of the equation with the Wick ordered Hamiltonian \eqref{ham-wick}, with initial data $(\phi_0^\o,  \phi_1^\o)$ distributed according to $P^{2m+2}_{\al,2}$, namely of the form \eqref{G5}. 
Note that the potential part of the Wick ordered Hamiltonian, $ \int_{\T^2} :\! u^{2m+2} \!:  dx $
has only been defined so far for $u$ distributed according to $P^{2m+2}_{\al,2}$. 
This definition can be extended to a wider class of functions of the form $z^\o+w$, where
$z^\o(t)=S(t)(\phi^\o_0,\phi_1^\o)$
and $w$ is a ``nice" function of positive regularity.
Indeed, for any $k\in\N$, from the property of the Hermite polynomials \eqref{Herm:sum}, 
we have
\begin{equation}\label{herm-bin}
 :\! (z_N+w_N)^{k}\!\!:  \, \, 
  = \sum_{\l = 0}^{k}
\begin{pmatrix}
k \\ \l
\end{pmatrix}
  :\! z_N^\l\!: w_N^{k - \l},
\end{equation}
where $z_N:=\P_Nz^\o=\P_N S(t)(\phi_0^\o,\phi_1^\o)$, $w_N:=\P_Nw$ and $:\! z_N^\l\!:  = H_\l(z_N; \sigma_{\al,N})$.
Then, using 
part (i) of Proposition \ref{PROP:sto1} below, 
we have that
$\{:\! z_N^\l\!: \}_N$ is a Cauchy sequence in $C_t(\R_+;W_x^{-\l(1-\al)-\eps, \infty}(\T^2))$
almost surely, for any $\eps>0$, and so converges to
\[  :\! z^\l\!:\,:=\lim_{N\to\infty} :\! z_N^\l\!: \textrm{ in } C_t(\R_+;W_x^{-\l(1-\al)-\eps, \infty}(\T^2)).\]
For $u=z^\o+w$, we can then define the Wick ordered monomial $:\!u^k\!:\,$ by:
\begin{equation*}
:\!u^k\!:\,:= :\! (z^\o+w)^{k}\!\!:  \, \, 
  = \sum_{\l = 0}^{k}
\begin{pmatrix}
k \\ \l
\end{pmatrix}
  :\! z^\l\!: w^{k - \l}.
\end{equation*}

\noi
Therefore, we can define $ \int_{\T^2} :\! u^{2m+2} \!:  dx $ in the Wick ordered energy
for $u$ of the form $u=z^\o+w$ for $w$ of positive regularity. 
With this definition and recalling Duhamel's formula,  
we arrive at the Hamiltonian evolution corresponding to the 
Wick ordered energy:
\begin{align}
\begin{cases}
\dt^2 u+(1 -\Dl)^\alpha  u \, +  :\! u^{2m+1} \!:\, = 0 \\
(u, \dt u) |_{t = 0} = (\phi_0^\o, \phi_1^\o), 
\end{cases}
\label{WNLW2}
\end{align}

\noi
where 
\[ 
(\phi_0^\o, \phi_1^\o)= \bigg( \sum_{n \in \Z^2} \frac{g_{0, n}(\o)}{\jb{n}^{\al}}e^{in\cdot x},
\sum_{n \in \Z^2} g_{1, n}(\o)e^{in\cdot x}\bigg).
\] 

\begin{theorem}\label{THM:GWP}
Fix $m\in\N$ and let $\al\in\left(\frac56,1\right)$ if $m=1$ and $\al\in\left(1-\frac{1}{2m^2+2m+1},1\right)$ if $m\geq 2$.
Then  \eqref{WNLW2} is almost surely globally well-posed in $C_tH^{-(1-\al)-\eps}_x$, for any $\eps>0$, with respect to the renormalized Gibbs measure $P^{2m+2}_{\al,2}$. 
Moreover, $P^{2m+2}_{\al,2}$ is invariant under the dynamics of \eqref{WNLW2}.
\end{theorem}

Note that the Wick ordered fNLW \eqref{WNLW2}
was only defined for $u$ of the form $u=S(t)(\phi_0^\o,\phi_1^\o)+w=z^\o+w$, for some $w$ of positive regularity. 
Therefore, in Theorem \ref{THM:GWP}, we use Duhamel's formula 
to decompose solutions in the above-mentioned form 
and study the solution theory for the perturbed Wick ordered fNLW that the residual term $w$
satisfies:
\begin{align}
\begin{cases}
\dt^2 w+(1 -\Dl)^\alpha  w \, +  :\! (z^\o+w)^{2m+1} \!:\, = 0 \\
(w, \dt w) |_{t = 0} = (0, 0).
\label{wick-NLW}
\end{cases}
\end{align}

\noi
The strategy of decomposing the solution of a nonlinear equation into a linear (rough) term and a residual (regular) term, that we call here `the first order expansion', is known in the realm of stochastic PDEs under the name of Da Prato-Debussche trick, see \cite{DPD}. We refer the reader to the early works by McKean \cite{McKean} and Bourgain \cite{Bou-CMP96} where this idea firstly appeared. See also \cite{BurqTz}.

\begin{remark}\rm
The proof of  Theorem \ref{THM:GWP}  is a combination of an almost sure local well-posedness theory (for \eqref{wick-NLW}) and a globalisation argument which goes back to Bourgain, see \cite{Bou-CMP94, Bou-CMP96}. The almost sure local well-posedness  is precisely the content of Proposition \ref{LWP-Str} 
in Section \ref{sec:LWP}. The extension of solutions from local-in-time to global-in-time follows by Bourgain's invariant measure and approximation argument \cite{Bou-CMP94, Bou-CMP96}. Briefly, it follows by an approximation argument which uses the fact that Proposition \ref{LWP-Str} remains valid for a truncated problem 
with uniform bounds in $N$, and the invariance of  the truncated  Gibbs measure defined by $\Pkn:=Z_N^{-1}R_N(u)\,d\mu_\al$.

We will not show details of the application of Bourgain's invariant measure and approximation argument in our context, as this is a fairly standard procedure, applicable to a wide range of equations.
We refer the readers to \cite{ROT} for a detailed exposition of the application of Bourgain's invariant measure and approximation argument in the case of nonlinear wave equations with
random data and/or stochastic forcing on a two-dimensional compact Riemannian manifold without boundary. 
\end{remark}

\begin{remark}
{\rm
For the almost sure local well-posedness theory of \eqref{wick-NLW}, we employ a fixed-point argument to exhibit the existence of a unique solution $w$ in a conveniently chosen Stricharz space $X^s(T)$ (see \eqref{ch5:cond0} and \eqref{def:str} for a definition of $X^s(T)$). 
The crucial ingredients for closing this fixed point argument are the Strichartz estimates in Lemma \ref{lemma-str}
and the considerations on the regularity of the stochastic objects $:\! z^{\l} \!:$ in Proposition \ref{PROP:sto1}, which is due to Oh and Zine \cite{Oh-Zine2022}.
}
\end{remark}

\begin{remark}\rm
In the case $m=1$, Theorem \ref{THM:GWP} improves a result from \cite{STX} for the Wick ordered cubic fNLW. 
Namely, in \cite{STX}, the authors proved almost sure global well-posedness of the Wick ordered cubic fNLW in the range $\al\in (\frac 89, 1)$, while in Theorem \ref{THM:GWP} we extend the range of $\al$ to $\al\in (\frac 56, 1)$.
\end{remark}

\begin{remark}\rm In \cite{OT2018}, Oh and Thomann consider the Wick ordered defocusing nonlinear wave equation with odd power nonlinearity, namely \eqref{NLW} with $\al=1$, construct the Gibbs measures, and prove an almost sure global well-posedness theory exploiting the first order expansion. 
The main challenge in our case compared to \cite{OT2018} is represented by the fact that the fNLW is only $\al$-smoothing, as clear from \eqref{duha-oper}, while  the wave equation is 1-smoothing. Furthermore, the regularity of the stochastic objects $:\! z^{\l} \!:$, $\l\in\{0,\dots, 2m+1\}$ 
is of order $-\l(1-\al)-\eps$, $\eps>0$ (see Proposition \ref{PROP:sto1}) in our case. 
It is this regularity that is responsible for the restriction on $\al$ in Theorem \ref{THM:GWP} or, equivalently, on the degree of the nonlinearity $u^{2m+1}$ that we can handle. 
In \cite{OT2018}, the stochastic objects have regularity $-\eps$ for any $m$, which in turn does not imply any restriction of the nonlinearity. 
\end{remark}

\subsubsection{Second order expansion}
In the proof of Theorem  \ref{THM:GWP}, more precisely in the proof of Proposition \ref{LWP-Str},
we see that the roughest term in Duhamel's formula for $w$ is 
\begin{equation}\label{eq:z_2}
z_2 : = \I(:\! z^{2m+1}\! :)=\int_0^t \frac{\sin ((t - t') \jb{\nb}^\al)}{\jb{\nb}^\al} 
:\! z^{2m+1} \!:  dt'.
\end{equation}

\noindent
Aiming at widening the range of the fractional exponent $\al$, we further decompose $w=z_2+w_2$, in the spirit of the first order expansion. 
We thus arrive at the second order expansion $u=z+z_2+w_2$, where $w_2$ solves the equation
\begin{align}
\begin{cases}
\dt^2 w_2+(1 -\Dl)^\alpha  w_2 + \, \sum_{\l=0}^{2m} \binom{2m+1}{\l} :\! z^{\l} \!:\,(z_2+w_2)^{2m+1-\l} = 0 \\
(w_2, \dt w_2) |_{t = 0} = (0, 0).
\end{cases}
\label{wick2-NLW}
\end{align}
Since, the nonlinearity in this equation is more regular than that in \eqref{wick-NLW}, 
in the case $m\geq 2$, we obtain the following improvement of Theorem  \ref{THM:GWP}.
\begin{theorem}\label{THM:GWP:sec-ord}
Fix $m\in\N$, $m\geq 2$ and $\al\in\left(1-\frac{1}{2m^2+m+1},1\right)$.
Then  \eqref{WNLW2} is almost surely globally well-posed in $C_tH^{-(1-\al)-\eps}_x$, for any $\eps>0$, with respect to the renormalized Gibbs measure $P^{2m+2}_{\al,2}$. 
Moreover, $P^{2m+2}_{\al,2}$ is invariant under the dynamics of \eqref{WNLW2}.

\end{theorem}

Theorem \ref{THM:GWP:sec-ord} is the main result of this work. 
The strategy of the proof of Theorem \ref{THM:GWP:sec-ord} is analogous to the one for Theorem \ref{THM:GWP}. 
Namely, the proof of Theorem \ref{THM:GWP:sec-ord} relies on a local well-posedness result that takes advantage of the second order expansion, namely Proposition \ref{LWP-Str-sec} in Section \ref{lwp-sec}, and on Bourgain's invariant measure and approximation argument. 
The additional difficulty in this case is that one needs 
estimates on products of stochastic objects $:\!z^{k_1}\!: z_2^{k_2}$, as depicted in Proposition \ref{PROP:sto1} (iii).
%

\begin{remark}\rm
Theorem \ref{THM:GWP:sec-ord} improves a second order expansion local well-posedness result for the Wick ordered fNLW from \cite{Oh-Zine2022}.
Both in \cite{STX}  and in \cite{Oh-Zine2022}, the authors use Sobolev embeddings when considering the local well-posedness of the Wick ordered fNLW. Our improved range of $\al$ comes from using the Strichartz estimates in Lemma \ref{lemma-str}.

 For readers' convenience, we remark here that the corresponding result of Oh and Zine is contained in \cite{Oh-Zine2022}*{Theorem 3.4} with
$d=2$, $\beta=\al$, $k=2m+1$, $m\geq1$. With these conditions, the range of $\al$ in  \cite[Theorem 3.4]{Oh-Zine2022} becomes  $\al>\max\left(1-\frac{1}{4m+3},1-\frac{1}{4m^2+1}\right)$. This is more restrictive than the condition on $\al$ in Theorem \ref{THM:GWP:sec-ord}.
\end{remark}

\begin{remark}\rm
A local well-posedness result relying on a second order expansion was also given for the Wick ordered defocusing cubic NLW  posed on the three-dimensional torus $\T^3$ in \cite{OPT}.  In \cite{OPT}, Oh, Tzvetkov and the second author only needed to control the product of stochastic objects  $ :\!z^2\!:  \I(:\!z^3\!:)$, while here we treat general nonlinearities, so we need the regularity results for general products of stochastic objects in Proposition \ref{PROP:sto1}.
\end{remark}

\begin{remark}\rm
The aim of this article and, in particular, of Theorem \ref{THM:GWP} is to discuss the almost sure global well-posedness for {\it all} the (renormalized) fNLW with an odd-power nonlinearity within a uniform framework. For this reason, we do not use the multilinear smoothing effect of the equation, which requires an analysis that is specific to the degree of the nonlinearity. Indeed, if one was to focus on a single nonlinearity (and the cubic nonlinearity would be the easiest to work with), then one could take advantage of the multilinear smoothing and, as a result, improve the range of the dispersion exponent $\alpha$ for which almost sure global well-posedness of fNLW holds for the case of that specific nonlinearity. However, as mentioned above, this is not the scope of this article.
We refer the readers to the work Bringman \cite{BringmanII} and the work of Oh, Wang and Zine \cite{OWZ} on cubic nonlinear wave equations on $\T^3$, that both take advantage of the multilinear smoothing. 
%

Another approach that has played a crucial role in the well-posedness study of the
nonlinear wave equation is the paracontrolled calculus approach, see \cite{GIP, GKO2, OOT, OOT2, BringmanI, BringmanII, BDNY}. This approach also requires an analysis that is specific to the degree of the nonlinearity and so, for the same reasons as above, we do not use it in this article. 
\end{remark}

\begin{remark}
\rm
We do not expect that a third (or higher) order expansion would improve 
Theorem \ref{THM:GWP:sec-ord}. This is due to the fact that the least regular terms in equation
\eqref{wick2-NLW} are $:\!z^{2m}\!: w_2$ and $:\!z^{2m}\!: z_2$ and a higher order expansion would not remove the term $:\!z^{2m}\!:w_2$, since it involves $w_2$.

\end{remark}

\subsection{Organisation of the paper}

The remaining part of this article is organized as
follows. In the next section, we state deterministic and stochastic tools needed for our
analysis. In Sections 3, we discuss the construction of the renormalized Gibbs measure. 
In Section 4, we discuss the almost sure local well-posedness of the renormalized fNLW that employs the first order expansion (this result is summarized in Proposition \ref{LWP-Str}).
In Section 5, we prove Proposition \ref{LWP-Str-sec}, namely the almost sure local well-posedness result for the renormalized fNLW that employs the second order expansion.
Finally, Appendix A contains a proof of Proposition \ref{LWP-Str} (which is a byproduct of the proof of Proposition \ref{LWP-Str-sec}).

\section{Preliminary deterministic and stochastic tools}

This section is devoted to the statement of useful estimates used along the paper. In particular, Subsection \ref{subsec2.1} concerns deterministic estimates, while Subsection \ref{subsec2.2} contains stochastic tools.

\subsection{Deterministic estimates}\label{subsec2.1}
We start by recalling the following estimates. They can be stated in arbitrary dimension, though we specialize to the 2D case. In what follows, we use the standard notation for the fractional Sobolev spaces $W^{s,p}(\T^2)$. $H^s(\T^2)$ corresponds to $p=2$.
\begin{lemma}[Fractional Leibniz rule]\label{LEM:prod}
\textup{(i)}  Let $s>0$,
$1\leq p_j,q_j \leq \infty$,  $j = 1, 2$, and $\frac 12\leq r\leq \infty$ such that $\frac1{p_j} + \frac1{q_j}= \frac1r$. 
Then, the following holds 
\begin{equation}  
\| \jb{\nb}^s (fg) \|_{L^r} 
\les \Big( \| f \|_{L^{p_1}} 
\| \jb{\nb}^s g \|_{L^{q_1}} + \| \jb{\nb}^s f \|_{L^{p_2}} 
\|  g \|_{L^{q_2}}\Big)
\label{bilinear+}
\end{equation}  
for any $f\in L^{p_1}(\T^2)\cap W^{s,p_2}(\T^2)$ and $g\in L^{q_2}(\T^2)\cap W^{s,q_1}(\T^2)$.\smallskip

\noi \textup{(ii)}  Let $s>0$, $1\leq p,q,r \leq \infty$ satisfying $\frac1p+\frac1q\leq\frac1r+\frac s2 $
and $q, r'\geq p'$, where $p'$ and $r'$ denote the H\"older conjugate exponents of $p$ and $r$ respectively.
Then we have
\begin{equation}  
\| \jb{\nb}^{-s}(fg) \|_{L^r} 
\les
\| \jb{\nb}^{-s} f \|_{L^p} \| \jb{\nb}^s g \|_{L^q}
\label{bil++}
\end{equation}  	
for any $f\in W^{-s,p}(\T^2)$ and $g\in W^{s,q}(\T^2)$.
\end{lemma}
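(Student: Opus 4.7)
My plan is to handle the two parts separately, both via Littlewood--Paley decomposition on $\T^2$ (dyadic Fourier projectors $P_j$ onto frequencies $\sim 2^j$), and to reduce them to standard harmonic-analytic tools (Bony's paraproduct, Bernstein, Fefferman--Stein). Both estimates are well-known; my intention is only to outline the route and then cite the Kato--Ponce inequality and its negative-index variant in the paper.

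For part (i), this is the Kato--Ponce fractional Leibniz rule. I would decompose the product via the Bony paraproduct
\[
fg = T_f g + T_g f + R(f,g),
\qquad T_fg := \sum_j (P_{<j-1}f)\, P_j g,
\]
with $R(f,g)$ the resonant high-high term. After applying $\jb{\nb}^s$, each paraproduct has essentially well-separated frequency support: in $T_f g$ the output is localized near $2^j$, so using $\jb{\nb}^s P_j \sim 2^{sj} P_j$ together with the Fefferman--Stein vector-valued maximal inequality and H\"older with exponents $(p_1,q_1)$ yields $\|\jb{\nb}^s T_f g\|_{L^r}\les\|f\|_{L^{p_1}}\|\jb{\nb}^s g\|_{L^{q_1}}$. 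The symmetric piece $T_g f$ produces the second term in \eqref{bilinear+}, and the resonant part $R(f,g)$ can be placed in either of the two groupings, since for $0\le s\le 1$ the frequency cancellation in the high-high interaction prevents the derivative loss. Only the $L^p$-boundedness ($1<p<\infty$) of the Littlewood--Paley square function and the maximal operator on $\T^2$ enters.

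For part (ii), I would argue by duality:
\[
\|\jb{\nb}^{-s}(fg)\|_{L^r} = \sup_{\|h\|_{L^{r'}}=1} \Big|\int_{\T^2} (fg)\, \jb{\nb}^{-s}h \, dx\Big|,
\]
and then decompose $fg$ itself via a paraproduct and pair each piece with $\jb{\nb}^{-s}h$. In the high-low interactions, one places $\jb{\nb}^{-s}$ on the high-frequency factor; in the resonant case one has to pay $s$ derivatives on one factor, using Bernstein's inequality on $\T^2$ to exchange $L^{r'}$ for the $L^{\tilde r'}$-type norm needed, which is exactly where the condition $\tfrac1p+\tfrac1q\le\tfrac1r+\tfrac{s}{2}$ (equivalently, a two-dimensional Sobolev embedding) is consumed. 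Summing the dyadic pieces against $h$ via H\"older in the indices $(p,q,\tilde r)$ with $\tfrac1p+\tfrac1q+\tfrac1{\tilde r}=1$ then closes the estimate.

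The main obstacle is the bookkeeping in part (ii): making the choice of intermediate exponents so that the assumption $\tfrac1p+\tfrac1q\le\tfrac1r+\tfrac{s}{2}$ is used sharply, and ensuring each paraproduct piece admits a clean pairing with the dual function $h$. Part (i) is comparatively routine once the paraproduct machinery is in place. In the write-up I would therefore rely on references (e.g.\ the Kato--Ponce inequality and its generalizations, and the negative-index product estimate used routinely in the paracontrolled-calculus literature) rather than reproduce the Littlewood--Paley computations.
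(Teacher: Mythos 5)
Your outline is a standard and correct route (Bony paraproducts plus Fefferman--Stein for part (i), duality plus Bernstein/Sobolev embedding for part (ii)), and your stated intention to ultimately cite the Kato--Ponce literature rather than reproduce the Littlewood--Paley computations is exactly what the paper does: its proof of this lemma consists solely of a reference to \cite{GKO, GKOT}. So your proposal matches the paper's approach, with the added benefit of sketching where the hypothesis $\frac1p+\frac1q\le\frac1r+\frac{s}{2}$ enters.
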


\noi
See \cite{BOZ} for a proof. See also \cite{GKO, GKOT}.

The following is a classical interpolation inequality. 

\begin{lemma}\label{LEM:int}
For  $0 < s_1  < s_2$,  the following interpolation holds
\begin{equation*}
\| u \|_{H^{s_1}} \leq \| u \|_{H^{s_2}}^{\frac{s_1}{s_2}} \| u \|_{L^2}^{\frac{s_2-s_1}{s_2}}
\end{equation*}
for any $u\in H^{s_2}$.
\end{lemma}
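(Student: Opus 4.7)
The statement is the classical Sobolev interpolation inequality, and the natural proof is via Hölder on the Fourier side.

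The plan is to pass to the Fourier coefficients and write the $H^{s_1}$ norm as an interpolation of the $H^{s_2}$ and $L^2$ weights. Concretely, first I would expand
\[
\|u\|_{H^{s_1}}^2 = \sum_{n \in \Z^2} \jb{n}^{2s_1} |\ft u(n)|^2,
\]
and then factor the integrand as
\[
\jb{n}^{2s_1} |\ft u(n)|^2 = \bigl(\jb{n}^{2s_2} |\ft u(n)|^2\bigr)^{s_1/s_2}\, \bigl(|\ft u(n)|^2\bigr)^{(s_2-s_1)/s_2},
\]
which is a valid pointwise identity since $\jb{n}^{2s_2 \cdot s_1/s_2} = \jb{n}^{2s_1}$ and the exponents of $|\ft u(n)|^2$ add to one.

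Next I would apply Hölder's inequality for sums with conjugate exponents $p = s_2/s_1$ and $p' = s_2/(s_2 - s_1)$ (both strictly greater than $1$ since $0 < s_1 < s_2$) to obtain
\[
\sum_{n \in \Z^2} \jb{n}^{2s_1} |\ft u(n)|^2 \le \left(\sum_{n \in \Z^2} \jb{n}^{2s_2} |\ft u(n)|^2\right)^{s_1/s_2} \left(\sum_{n \in \Z^2} |\ft u(n)|^2\right)^{(s_2-s_1)/s_2},
\]
which by Plancherel is exactly $\|u\|_{H^{s_2}}^{2s_1/s_2} \|u\|_{L^2}^{2(s_2-s_1)/s_2}$. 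Taking square roots yields the desired estimate. The hypothesis $u \in H^{s_2}$ ensures both factors on the right are finite; if $\|u\|_{H^{s_2}} = \infty$ the inequality is trivial.

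There is essentially no obstacle here: the only thing to be careful about is choosing the Hölder exponents so that the $\jb{n}$-weight reconstructs $\jb{n}^{2s_1}$ exactly, and both exponents are admissible precisely under the hypothesis $0 < s_1 < s_2$. The same proof works verbatim on $\R^d$ by replacing sums with integrals, but since the paper is set on $\T^2$ the discrete Fourier version above is the cleanest route.
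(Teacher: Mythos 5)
Your proof is correct, and it is exactly the argument the paper has in mind: the paper's proof reads ``It easily follows by the definition of the spaces, in conjunction with the H\"older's inequality,'' and your Fourier-side factorization with exponents $s_2/s_1$ and $s_2/(s_2-s_1)$ is the standard way to carry that out. No issues.
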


We proceed  by recalling the following estimate on discrete convolution. See, for example,  Lemma 4.1 in \cite{MWX}.

\begin{lemma}\label{LEM:SUM}
Let  $\al, \be \in \R$ satisfy

\[ \al+ \be > 2  \qquad \text{and}\qquad \al, \be < 2.\]
\noi
Then, the following holds
\[
 \sum_{\substack{n_1,n_2\in \Z^2\\n = n_1 + n_2}} \frac{1}{\jb{n_1}^\al \jb{n_2}^\be}
\les \jb{n}^{2 - \al - \be}\]

\noi
for any $n \in \Z^2$.
\end{lemma}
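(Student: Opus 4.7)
The plan is to split the sum according to the relative sizes of $|n_1|$, $|n_2|$, and $|n|$. Since $n_1 + n_2 = n$, at least one of $\langle n_1 \rangle$, $\langle n_2 \rangle$ is of order $\langle n \rangle$, and I will use this to decouple the two factors in each regime. I will consider three cases:
\begin{align*}
A &= \{(n_1,n_2) : n_1+n_2=n,\ |n_1|\le |n|/2\},\\
B &= \{(n_1,n_2) : n_1+n_2=n,\ |n_2|\le |n|/2\},\\
C &= \{(n_1,n_2) : n_1+n_2=n,\ |n_1|>|n|/2,\ |n_2|>|n|/2\}.
\end{align*}

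In region $A$ we have $|n_2|=|n-n_1|\ge |n|/2$, so $\langle n_2\rangle\sim\langle n\rangle$. Pulling this factor out, the contribution is bounded by
\[
\langle n\rangle^{-\beta}\sum_{|n_1|\le |n|/2}\langle n_1\rangle^{-\alpha}\lesssim \langle n\rangle^{-\beta}\langle n\rangle^{2-\alpha}=\langle n\rangle^{2-\alpha-\beta},
\]
where I use the two-dimensional fact $\sum_{|k|\le R}\langle k\rangle^{-\alpha}\lesssim R^{2-\alpha}$, which is valid precisely because $\alpha<2$. Region $B$ is symmetric and uses $\beta<2$ in the same way.

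Region $C$ is where the hypothesis $\alpha+\beta>2$ enters. I further split it into $C_1=\{|n_1|\ge 2|n|\}$ and $C_2=\{|n|/2<|n_1|<2|n|\}$. In $C_1$ the triangle inequality yields $|n_1|/2\le|n_2|\le 2|n_1|$, hence $\langle n_1\rangle\sim\langle n_2\rangle$, so
\[
\sum_{C_1}\frac{1}{\langle n_1\rangle^\alpha\langle n_2\rangle^\beta}\lesssim\sum_{|n_1|\ge 2|n|}\langle n_1\rangle^{-\alpha-\beta}\lesssim \langle n\rangle^{2-\alpha-\beta},
\]
where convergence and the power of $\langle n\rangle$ use $\alpha+\beta>2$. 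In $C_2$ we have $\langle n_1\rangle\sim\langle n\rangle$ and $\langle n_2\rangle\gtrsim\langle n\rangle$, so each summand is $\lesssim\langle n\rangle^{-\alpha-\beta}$, and there are $O(\langle n\rangle^2)$ admissible $n_1$, giving the same bound $\langle n\rangle^{2-\alpha-\beta}$.

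The main (mild) obstacle is keeping track of the two-dimensional counting estimate $\sum_{|k|\le R}\langle k\rangle^{-\alpha}\lesssim R^{2-\alpha}$ for $\alpha<2$ and, dually, $\sum_{|k|\ge R}\langle k\rangle^{-\gamma}\lesssim R^{2-\gamma}$ for $\gamma>2$; both follow by comparison with the integral $\int_0^R r^{1-\alpha}\,dr$ or $\int_R^\infty r^{1-\gamma}\,dr$. All other manipulations reduce to the triangle inequality $|n_2|=|n-n_1|$ and the decomposition above. Summing the contributions from $A$, $B$, $C_1$, $C_2$ yields the claimed bound.
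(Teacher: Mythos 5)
Your proof is correct; the paper gives no argument for this lemma, merely citing ``direct computations'' and Lemma~4.1 of \cite{MWX}, and your decomposition into the regions $A$, $B$, $C_1$, $C_2$ together with the integral-comparison counting bounds is exactly that standard computation. One small point worth making explicit: the steps $\jb{n_2}^{-\be}\les\jb{n}^{-\be}$ in region $A$ and $\jb{n_1}^{-\al}\les\jb{n}^{-\al}$ in region $B$ require $\al,\be>0$, which does follow from the stated hypotheses since $\be>2-\al>0$ and $\al>2-\be>0$.
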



We  now state the Strichartz estimates used to prove the local well-posedness results. 
For this purpose, let us introduce some notations.  We call a pair  $(p,q)$ fractional admissible provided that 
\begin{equation}\label{frac-ad}
(p,q)\in [2,\infty]\times [2,\infty], \quad (p,q)\neq(2,\infty),  \,\hbox{ and } \,\frac2p+\frac2q\leq 1.
\end{equation}
For a fixed $\al$, we define the exponent 
\begin{equation}\label{def-gam}
\gamma_{p,q}=1-\frac2q-\frac\al p.
\end{equation}
Let us consider the Cauchy problem 
\begin{align}
\begin{cases}
\dt^2 v +(1 -\Dl)^\alpha  v  = F \\
(v, \dt v) |_{t = 0} = (v_0, v_1), 
\end{cases}
\qquad (t,x) \in I \times \T^2,
\label{NLW-str}
\end{align}
for $I\subset\R$ an interval containing $0$. We have the following Strichartz estimates; see \cite[Corollary 1.4]{VDD-JDE}. 

\begin{lemma}\label{lemma-str} Let $I$ be a bounded interval containing $0$,  $\al\in(0,1)$,  $(p,q)$ be a fractional admissible pair,  $(v_0,v_1)\in H^{\gamma_{p,q}} \times H^{\gamma_{p,q}-\al}$, and let $v$ be a weak solution to \eqref{NLW-str}. The following estimate holds true:
\begin{equation}\label{eq:stri-est}
\|v\|_{L^p_t(I; L^q_x)}\lesssim \|v_0\|_{H^{\gamma_{p,q}}_x} +\|v_1\|_{H^{\gamma_{p,q}-\al}_x}+\|F\|_{L^1_t(I;H^{\gamma_{p,q}-\al}_x)}.
\end{equation}
\end{lemma}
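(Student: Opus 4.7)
The plan is to follow the classical recipe for dispersive Strichartz estimates: reduce to the half-wave propagators $e^{\pm it\jb{\nb}^\al}$, prove a dispersive estimate on each dyadic frequency block, run a $TT^*$/Keel--Tao argument to pass to space-time bounds, and finally treat the forcing by Minkowski. Writing $\cos(t\jb{\nb}^\al)=\tfrac12(e^{it\jb{\nb}^\al}+e^{-it\jb{\nb}^\al})$ and similarly for the sine, together with a Littlewood--Paley decomposition $f=\sum_N P_N f$, reduces matters to bounding each $e^{\pm it\jb{\nb}^\al}P_N f$ individually.

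At frequency $|\xi|\sim N$ the phase $\Phi(\xi)=\jb{\xi}^\al$ is nondegenerate with Hessian of size $N^{\al-2}$. On $\R^2$, stationary phase applied to $\int e^{i(x\cdot\xi+t\jb{\xi}^\al)}\chi(\xi/N)\,d\xi$ yields the dispersive bound
\[
\|e^{it\jb{\nb}^\al}P_N f\|_{L^\infty_x}\les N^{2-\al}|t|^{-1}\|P_Nf\|_{L^1_x},
\]
which transfers to $\T^2$ for $t\in I$ bounded by Poisson summation, the low-frequency singularity at $\xi=0$ being excluded by the Klein--Gordon weight (as noted by the authors at \eqref{G5}). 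Interpolating against the trivial $L^2\to L^2$ bound gives a Lebesgue estimate at intermediate exponents; the $TT^*$ argument coupled with Hardy--Littlewood--Sobolev (Keel--Tao at the endpoint $p=2$) then produces, for any fractional admissible $(p,q)$,
\[
\|e^{it\jb{\nb}^\al}P_N f\|_{L^p_t L^q_x(I\times\T^2)}\les N^{\gamma_{p,q}}\|P_N f\|_{L^2_x},
\]
with $\gamma_{p,q}=1-2/q-\al/p$ emerging as the bookkeeping of losses. A square-function sum over dyadic $N$, combined with the $\jb{\nb}^{-\al}$ factor in the sine-propagator, recovers the homogeneous part $\|v_0\|_{H^{\gamma_{p,q}}}+\|v_1\|_{H^{\gamma_{p,q}-\al}}$ of \eqref{eq:stri-est}. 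The inhomogeneous contribution follows from Minkowski's inequality in time applied to the Duhamel term; no Christ--Kiselev argument is required since the forcing is already measured in $L^1_t$.

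The main obstacle is the dispersive estimate on the torus. Unlike on $\R^2$, there is no scaling invariance and periodic refocusing of the fractional flow could in principle spoil global-in-time bounds, which is why the statement is restricted to bounded $I$ with implicit constants depending on $|I|$. One must also carefully track the nondegeneracy of $\nb^2\Phi$ on the annulus $|\xi|\sim N$ to avoid logarithmic losses when summing the Littlewood--Paley pieces, and manage the transition between the Hessian regime and the low-frequency regime where the Klein--Gordon weight becomes essential. These are the technical ingredients of \cite[Corollary 1.4]{VDD-JDE}, from which the stated estimate is imported.
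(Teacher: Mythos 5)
The paper does not actually prove this lemma: it imports it verbatim from \cite[Corollary~1.4]{VDD-JDE}, remarking only that the periodic case follows from the Euclidean one via the (frequency\nobreakdash-localized) finite speed of propagation of the fractional wave flow, and pointing to \cite{STX} for a self-contained proof. Your sketch of the underlying argument is sound and the bookkeeping checks out: on $|\xi|\sim N$ both eigenvalues of $\nb^2\jb{\xi}^\al$ are of size $N^{\al-2}$ (it is $\al<1$ that keeps the radial eigenvalue nondegenerate, in contrast to the pure wave case $\al=1$), the dispersive bound $\min(N^2,\,N^{2-\al}|t|^{-1})$ fed into $TT^*$/Keel--Tao yields a loss $(2-\al)\bigl(\tfrac12-\tfrac1q\bigr)$ per frequency block, which equals $\gamma_{p,q}=1-\tfrac2q-\tfrac\al p$ on the sharp line $\tfrac2p+\tfrac2q=1$ (subcritical pairs then follow by Bernstein), and Minkowski indeed suffices for the $L^1_tH^{\gamma_{p,q}-\al}$ forcing, so no Christ--Kiselev is needed. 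Where you genuinely diverge from the paper is the transference to $\T^2$: you periodize the kernel and invoke Poisson summation, whereas the paper (following \cite{VDD-JDE}) uses finite speed of propagation. These are two faces of the same fact --- the frequency-$N$ kernel is concentrated in $|x|\les\max(1,|t|N^{\al-1})$ with rapidly decaying tails, so for $|t|$ bounded only $O(1)$ periodic translates contribute --- but your diagnosis that refocusing ``could spoil global-in-time bounds'' slightly misplaces the issue: on a compact manifold loss of dispersion is not a large-time phenomenon (for Schr\"odinger it already occurs at fixed rational times), and what makes it harmless here is precisely the bounded group velocity $\al|\xi|\jb{\xi}^{\al-2}\les 1$, which the finite-speed formulation makes transparent; the dependence on $|I|$ enters only through the implicit constants. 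Since both you and the paper ultimately defer the technical details to the same reference, the two accounts are essentially equivalent in substance.
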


Note that the Strichartz estimates on $\T^2$ follow from the Strichartz estimates on $\R^2$  by exploiting the finite speed of propagation  for the linear fractional wave equation. Beside \cite{VDD-JDE}, we also mention the recent paper \cite{STX} which includes a self-contained proof.

\subsection{Regularity of stochastic objects }\label{subsec2.2}

Firstly, we  introduce  basic definitions from stochastic analysis, see  the monographs \cite{Bog, Shige}, aiming at recalling the Wiener chaos estimate (see Lemma~\ref{LEM:hyp} below). 

Let $\mu$ be a Gaussian measure on a separable Banach space $B$,
with $H \subset B$ as its Cameron-Martin space. We define the abstract Wiener space  as the triple $(H, B, \mu)$. Given a complete orthonormal system $\{e_j \}_{ j \in \N} \subset B^*$ of $H^* = H$ (here $^*$ stands for the dual space),
we  define a polynomial chaos of order
$k$ to be an element of the form $\prod_{j = 1}^\infty H_{k_j}(\jb{x, e_j})$, 
where $x \in B$, $k_j \ne 0$ for only finitely many $j$'s, $k= \sum_{j = 1}^\infty k_j$, 
$H_{k_j}$ is the Hermite polynomial of degree $k_j$, 
and $\jb{\cdot, \cdot} = \vphantom{|}_B \jb{\cdot, \cdot}_{B^*}$ denotes the $B$--$B^*$ duality pairing.
The closure  of 
polynomial chaoses of order $k$ 
under $L^2(B, \mu)$ is then denoted by $\mathcal{H}_k$.
The elements in $\mathcal{H}_k$ 
are called homogeneous Wiener chaoses of order $k$.
We also set
\begin{align}
\mathcal{H}_{\leq k} := \bigoplus_{j = 0}^k \mathcal{H}_j
\notag
\end{align}

\noi
for $k \in \N$. The  Wiener chaos estimate as following.
\begin{lemma}\label{LEM:hyp}
Let $k \in \N$.
Then, we have
\begin{equation*}
\|X \|_{L^p(\mu)} \leq (p-1)^\frac{k}{2} \|X\|_{L^2(\mu)}
\end{equation*}
	
\noi
for any $p \geq 2$
and any $X \in \mathcal{H}_{\leq k}$.
	
\end{lemma}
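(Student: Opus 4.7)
The plan is to derive this Wiener chaos estimate from Nelson's hypercontractivity inequality for the Ornstein-Uhlenbeck semigroup on the abstract Wiener space $(H, B, \mu)$. Recall that the Mehler semigroup $\{T_t\}_{t \geq 0}$ diagonalizes in the Wiener chaos decomposition: $T_t Y_j = e^{-jt} Y_j$ for every $Y_j \in \mathcal{H}_j$, and Nelson's theorem asserts that $T_t$ acts as a contraction from $L^2(\mu)$ to $L^p(\mu)$ whenever $p - 1 \leq e^{2t}$. I would invoke this result as a black box from one of the monographs \cite{Bog, Shige} already cited.

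Given $X \in \mathcal{H}_{\leq k}$, I would decompose $X = \sum_{j=0}^{k} X_j$ with $X_j \in \mathcal{H}_j$. The key step is to exhibit $X$ as the image under $T_t$ of an explicit function with controlled $L^2$ norm. Choosing $t := \tfrac{1}{2}\log(p-1)$ so that Nelson's contraction property is sharp at exponent $p$, I define
\begin{equation*}
Y := \sum_{j=0}^{k} e^{jt}\, X_j.
\end{equation*}
Then $T_t Y = X$ by construction, and hypercontractivity yields
\begin{equation*}
\|X\|_{L^p(\mu)} = \|T_t Y\|_{L^p(\mu)} \leq \|Y\|_{L^2(\mu)}.
\end{equation*}

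It remains to bound $\|Y\|_{L^2(\mu)}$. Exploiting the $L^2(\mu)$-orthogonality of distinct chaoses, I compute
\begin{equation*}
\|Y\|_{L^2(\mu)}^2 = \sum_{j=0}^{k} e^{2jt}\, \|X_j\|_{L^2(\mu)}^2 \leq e^{2kt} \sum_{j=0}^{k} \|X_j\|_{L^2(\mu)}^2 = e^{2kt}\, \|X\|_{L^2(\mu)}^2,
\end{equation*}
where the inequality uses $e^{2jt} = (p-1)^j \leq (p-1)^k$ for $p \geq 2$ and $0 \leq j \leq k$. Since $e^{kt} = (p-1)^{k/2}$, combining the displays yields exactly the desired bound; the case $p = 2$ is trivial.

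The main (in fact, the only) nontrivial ingredient is Nelson's hypercontractivity theorem itself, which I would treat as an input rather than reproving. The remainder of the argument is an elementary rearrangement of the spectral decomposition of $T_t$ combined with $L^2$-orthogonality of distinct chaoses; no properties of the Gaussian measure $\mu$ beyond what is encoded in Nelson's theorem enter into the proof.
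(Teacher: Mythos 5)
Your proof is correct and follows exactly the route the paper indicates: the paper does not write out a proof of this lemma but attributes it to Nelson's hypercontractivity of the Ornstein--Uhlenbeck semigroup (citing \cite{Simon}), and your argument is the standard derivation from that input, with the choice $t=\tfrac12\log(p-1)$, the spectral action $T_tX_j=e^{-jt}X_j$, and the $L^2$-orthogonality of distinct chaoses all used correctly.
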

The result above comes from the 
 the  hypercontractivity of the Ornstein-Uhlenbeck
semigroup $U(t) = e^{tL}$ due to Nelson \cite{Nelson}, see
\cite[Theorem~I.22]{Simon}, where  $L = \Dl -x \cdot \nabla$ is 
the Ornstein-Uhlenbeck operator. Here, to simplify the exposition, 
we just gave the definition of the Ornstein-Uhlenbeck operator $L$
when $B$ is the $d$-dimensional Euclidean space, see also \cite{BenOh} and \cite[Section 3]{Tz2010}. 
Let us also recall that  any element in $\mathcal H_k$ 
is an eigenfunction of $L$ with corresponding eigenvalue $-k$.

\medskip

Next we recall a lemma which will be used later on, in the proof of Lemma \ref{LEM:Dr}. 
Additionally, (a more extensive version of) this lemma
is crucial for the proof of 
Lemma \ref{lem:reg} below, a proof that we omit.
We start with the following definition. 


\begin{definition}
Given a  stochastic process $X:\R_+\mapsto \mathcal D^\prime(\T^2)$, we say it is spatially homogeneous if $\{X(t, \cdot)\}_{t\in\R^+}$ and $\{X(t, x_0+\cdot)\}_{t\in\R^+}$  have the same law for any $x_0\in\T^2$. 
\end{definition}



\begin{lemma}\label{lem:reg} Let $X:\R_+\mapsto \mathcal D^\prime(\T^2)$ be a spatially homogeneous stochastic process.
Suppose that there exists $k\in\N$ such that $X(t)$ belong to $\mathcal H_k$ for any $t\in\R_+$. 

Let $t\in\R_+$. Suppose that there exists $s_0\in\R$ such that 
\[
\E (|\ft X(t, n)|^2)\lesssim \jb{n}^{-2-2s_0}
\]
for any $n\in\Z^2$. Then, for any $s<s_0$ and any $p\geq 1$, there exists $C=C(s_0-s)$ such that
\begin{equation*}
\E(\|X(t)\|^p_{W^{s,\infty}})\leq C p^{\frac{kp}{2}}
\end{equation*}
and so $X(t)\in W^{s,\infty}$ almost surely. 

%
\end{lemma}

Lemma \ref{lem:reg}   is proved by 
straightforwardly employing  the  Wiener chaos estimate, i.e. Lemma~\ref{LEM:hyp} above. We refer to  \cite[Proposition 5]{MWX} and  \cite[Appendix A]{OOTz} for the proof. 

\medskip
We now state the main result on the regularity of stochastic objects that will be used in the proof of the almost sure local-wellposedness of the Wick ordered fNLW \eqref{WNLW2}. 
This result is Proposition 3.1 from \cite{Oh-Zine2022} in the special case of dimension $d=2$ and $\beta=\alpha$ (with $\beta$ defined in \cite{Oh-Zine2022}).

\begin{proposition}[\cite{Oh-Zine2022}]\label{PROP:sto1}

Let $\al > 0$ and $k \in \N$.\rule[-2mm]{0mm}{0mm}
\\
\noi
\textup{(i)}
Given  $\l \in \N$, let   $\al > 1-\frac{1}{\l}$.
Then,  for  $s < \l(\al - 1)$, 
$\{:\!z_N^\l\!: \}_{N \in \N}$ is 
 a Cauchy sequence
in 
$C_t(\R_+;W_x^{s,\infty}(\T^2))$,  almost surely.
In particular,
denoting the limit by $:\!z^\l \!: $,  
we have
  \[:\!z^\l \!: \, \in C_t(\R_+;W_x^{-\l(1-\al)-\eps, \infty}(\T^2))
  \]
  
  \noi 
for any $\eps >0$, almost surely.

\smallskip

\noi
\textup{(ii)}
Assume $\al > 1-\frac{1}{k}$.
Then, for   $s < k(\al - 1)+\al$, 
$\{  \I(:\!z_N^k\!:)  \}_{N \in \N}$ is 
 a Cauchy sequence
in 
$C_t(\R^+;W_x^{s,\infty}(\T^2))$,  almost surely.
In particular,
denoting the limit by $ \I(:\!z^k\!:) $, 
we have
  \[ \I(:\!z^k\!:)  \in C_t(\R_+;W_x^{-k(1-\al)+\al-\eps, \infty}(\T^2))
  \]
  
  \noi 
for any $\eps >0$, almost surely.

Furthermore, suppose that 
$\al>1-\frac{1}{k+1}$.
Let $k_2 \in \N$.
Then, for   $s < k(\al - 1)+\al$, \\
$\big\{  \big(\I(:\!z_N^k\!:)  \big)^{k_2} \big\}_{N \in \N}$ is 
 a Cauchy sequence
in 
$C_t(\R_+;W_x^{s,\infty}(\T^2))$,  almost surely.
In particular,
denoting the limit by $\big( \I(:\!z^k\!:) \big)^{k_2}$, 
we have
  \[ \big(\I(:\!z^k\!:) \big)^{k_2} \in C_t(\R_+;W_x^{-k(1-\al)+\al-\eps, \infty}(\T^2))
  \]
  
  \noi 
for any $\eps >0$, almost surely.

\smallskip
\noi
\textup{(iii)} 
Fix  integers $0 \le k_1 \le k-1$ and $0 \le k_2 \le k$. Assume that
\begin{align}
\al > 1 - \frac{1}{2k+1}. 
\label{Zx1}
\end{align}
\noi
Given $N \in \N$, define $Y_N$ by 
\[Y_N =\, :\!z_N^{k_1}\!:  \big(\I(:\!z_N^k\!:)\big)^{k_2}.\]

\noi
Then,  for  $s <- k_1(1-\al)$, 
$\{Y_N \}_{N \in \N}$ is 
 a Cauchy sequence
in 
$C_t(\R_+;W_x^{s,\infty}(\T^2))$,  almost surely.
In particular,
denoting its limit by $ 
Y =\, :\!z^{k_1}\!:  \big(\I(:\!z^k\!:)\big)^{k_2} $, 
we have
  \[ Y =\, :\!z^{k_1}\!:  \big(\I(:\!z^k\!:)\big)^{k_2}  \in C_t(\R_+;W_x^{-k_1(1-\al)-\eps, \infty}(\T^2))
  \]
  
  \noi 
for any $\eps >0$, almost surely.
\end{proposition}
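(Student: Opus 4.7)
The plan is to apply the regularity criterion of Lemma \ref{lem:reg} to each stochastic object $X_N(t)$ in parts (i)--(iii). In every case we need to establish three kinds of bounds on the Fourier coefficients: (a) a uniform second-moment bound $\mathbb{E}|\widehat{X}_N(n,t)|^2 \lesssim \langle n \rangle^{-2-2s_0}$ producing spatial regularity $s < s_0$; (b) the same bound for the difference $\widehat{X}_N - \widehat{X}_M$ with an extra factor $N^{-\gamma}$, so as to obtain a Cauchy sequence in $N$; and (c) a time-difference bound of the form \eqref{fnlw-scale}, yielding continuity in time. All three moment estimates will be carried out via Proposition \ref{PROP:main}: the object is first written in the form \eqref{A1}, its highest-order Wiener chaos is controlled through \eqref{high-order-P1}--\eqref{P2}, and the resulting multi-frequency sum is then evaluated by iterating Lemma \ref{LEM:SUM}. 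The lower-order Wiener chaos contributions produced by pairings (contractions) require slightly more delicate computations in the same spirit, and we defer them to \cite{Oh-Zine2022}.

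For part (i), the Wick power $:\!z_N^\ell\!: \,= H_\ell(z_N;\sigma_{\alpha,N})$ belongs to $\mathcal{H}_\ell$, and by \eqref{lin:fou-tru} its Fourier coefficient takes the form $\widehat{:\!z_N^\ell\!:}(n,t) = \sum_{n=n_1+\cdots+n_\ell,\,|n_j|\le N} \mathcal{F}_\ell(n_1,\ldots,n_\ell,t)$ with $\mathbb{E}|\mathcal{F}_\ell|^2 \lesssim \prod_j \langle n_j \rangle^{-2\alpha}$. Applying \eqref{high-order-P1} and then iterating Lemma \ref{LEM:SUM} exactly $(\ell-1)$ times yields $\mathbb{E}|\widehat{:\!z_N^\ell\!:}(n,t)|^2 \lesssim \langle n \rangle^{-2+2\ell(1-\alpha)}$, where the last iterative step requires precisely $\alpha > 1 - 1/\ell$; hence $s_0 = -\ell(1-\alpha)$. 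The $N$-Cauchy property follows by inserting a truncation difference on one factor, and the time continuity from $|1-\cos(h\langle n \rangle^\alpha)| \lesssim |h|^\sigma \langle n \rangle^{\alpha\sigma}$ applied to one factor together with \eqref{P2}.

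For part (ii), the Duhamel formula gives $\widehat{\mathcal{I}(:\!z_N^k\!:)}(n,t) = \int_0^t \frac{\sin((t-t')\langle n \rangle^\alpha)}{\langle n \rangle^\alpha}\widehat{:\!z_N^k\!:}(n,t')\,dt'$, so the calculation of (i) provides an extra factor $\langle n \rangle^{-2\alpha}$ and yields $\mathbb{E}|\widehat{\mathcal{I}(:\!z^k\!:)}(n,t)|^2 \lesssim t^2 \langle n \rangle^{-2-2s_0'}$ with $s_0' = -k(1-\alpha) + \alpha$. The threshold $\alpha > 1 - 1/(k+1)$ is exactly what makes $s_0' > 0$, so that by Lemma \ref{lem:reg} we get $\mathcal{I}(:\!z^k\!:) \in C_t W^{s_0'-\varepsilon,\infty}_x$ almost surely. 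Since $W^{s,\infty}(\mathbb{T}^2)$ is a Banach algebra for $s > 0$, the pointwise power $(\mathcal{I}(:\!z^k\!:))^{k_2}$ lies in the same space with the same regularity, and the Cauchy-in-$N$ property follows from the telescoping identity $a^{k_2}-b^{k_2} = (a-b)\sum_{j=0}^{k_2-1} a^j b^{k_2-1-j}$ combined with the algebra bound.

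For part (iii), we realize $Y_N$ in the setting of Proposition \ref{PROP:main} as a product of $J = 1+k_2$ components with $F_1 = \,:\!z_N^{k_1}\!:\,$ and $F_{j+1} = \mathcal{I}(:\!z_N^k\!:)$ for $j=1,\ldots,k_2$; the bound \eqref{high-order-P1} applied to the highest-order chaos $\pi_{K_J}Y_N$ gives
\[
\mathbb{E}\bigl|\widehat{\pi_{K_J}Y_N}(n,t)\bigr|^2 \lesssim \sum_{n=n_0+n_1+\cdots+n_{k_2}} \langle n_0 \rangle^{-2+2k_1(1-\alpha)} \prod_{j=1}^{k_2} \langle n_j \rangle^{-2-2s_0'}.
\]
Since $\alpha > 1 - 1/(2k+1)$ implies $s_0' > 0$, each factor $\langle n_j \rangle^{-2-2s_0'}$ lies in $\ell^1(\mathbb{Z}^2)$; a standard splitting into $|n_0| \ge |n|/2$ versus $|n_0| < |n|/2$ (in the spirit of Peetre's inequality combined with Young's convolution inequality) then shows that the $(k_2+1)$-fold convolution is bounded by $\langle n \rangle^{-2+2k_1(1-\alpha)}$, giving $s_0 = -k_1(1-\alpha)$ as claimed. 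The main obstacle is the treatment of the lower-order Wiener chaoses $\mathcal{H}_{k_1+kk_2-2j}$, $j \ge 1$, produced by pairing an index of $\,:\!z_N^{k_1}\!:\,$ with an internal index of some $\mathcal{I}(:\!z_N^k\!:)$ factor; the extra summation over these paired (internal) indices is precisely what forces the stronger threshold $\alpha > 1 - 1/(2k+1)$, and the complete analysis of these contractions is performed in \cite{Oh-Zine2022}. The $N$-Cauchy and time-continuity estimates in (ii)--(iii) are obtained from the same arguments after inserting either an $N^{-\gamma}$ factor from a truncation mismatch in one component or $|h|^\sigma \langle n_\star \rangle^{\alpha\sigma}$ from $\delta_h$ applied to one component, as provided by the telescoping structure of \eqref{P2}.
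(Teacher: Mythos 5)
Your proposal is correct and follows essentially the same route as the paper's proof: Proposition \ref{PROP:main} plus iterated applications of Lemma \ref{LEM:SUM} for the highest-order chaos bounds, Lemma \ref{lem:reg} for the spatial/temporal regularity and the Cauchy property, the positivity of $-k(1-\al)+\al$ (i.e.\ $\al>1-\tfrac{1}{k+1}$) to take deterministic powers of $\I(:\!z^k\!:)$, and deferral of the lower-order Wiener chaos contributions in (iii) to \cite{Oh-Zine2022}. The only cosmetic differences are that you invoke the algebra property of $W^{s,\infty}$ where the paper uses Sobolev embedding with the fractional Leibniz rule, and your dyadic splitting in (iii) avoids the harmless $\eps_1$-loss the paper incurs via Lemma \ref{LEM:SUM}.
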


\begin{remark}\rm
Let us note that the condition \eqref{Zx1} implies  the conditions in \textup{(i)} (namely $\al>1-\frac1k$) and \textup{(ii)} (namely $\al>1-\frac{1}{k+1}$). 
\end{remark}

\section{Construction of the renormalized Gibbs measures} 
 
In this section we construct the renormalized Gibbs measure associated to \eqref{WNLW2}. We first prove Proposition \ref{prop:conv:G}, then we prove the exponential integrability claimed in Theorem \ref{thm:exp:int} by means of the variational  approach of Barashkov and Gubinelli, see \cite{BG}.

\subsection{Convergence of random variables} We first prove the convergence of the random variables $G_N(u)= \int_{\T^2} :\! (\P_N u)^{2m+2}\! :  dx$ as in Proposition \ref{prop:conv:G}. Let us note that it is enough to prove the result for $p=2$, the general $p$ follows by using the Wiener chaos estimate in Lemma \ref{LEM:hyp}.
Hence, we prove  \eqref{eq:cauchy:G} for $p=2$.\medskip

With this aim in mind, we introduce some notations, and some preliminary results. 
Let $\s_{\al,N}$ be as in \eqref{G6}.
For a fixed $x \in \T^2$ and 
 $N \in \N$, 
 we also define
\begin{align}
\eta_{\al,N}(x) (\cdot) & := \frac{1}{\s_{\al,N}^\frac{1}{2}}
\sum_{|n| \leq N} \frac{\cj{e_n(x)}}{\jb{n}^\al}e_n(\cdot)
\qquad \text{and}\qquad 
\g_{\al,N} (\cdot) :=
\sum_{ |n| \leq N} \frac{1}{\jb{n}^{2\al}}e_n(\cdot),
\label{W3}
\end{align}
	
\noi
where $e_n(y) = e^{in\cdot y}$.
Note that $\eta_{\al,N}(x)(\cdot)$ is real-valued
with unitary $L^2(\T^2)$-norm, namely 
\[
\| \eta_{\al,N}(x)\|_{L^2} = 1
\]
for all  $x \in \T^2$ and all $N \in \N$.
Moreover, we have 
\begin{align}
\jb{\eta_{\al,M}(x), \eta_{\al,N}(y)}_{L^2(\T^2)}
= \frac{1}{\s_{\al,M}^\frac{1}{2}\s_{\al,N}^\frac{1}{2}} \g_{\al,N}(y-x)
= \frac{1}{\s_{\al,M}^\frac{1}{2}\s_{\al,N}^\frac{1}{2}} \g_{\al,N}(x-y), 
\label{W4}
\end{align}

\noi
for fixed $x, y\in \T^2$
and  $M\geq N$. \medskip

Furthermore, we introduce the white noise functional.
Let  $\xi(x;\o)$ be the (real-valued) mean-zero Gaussian white noise on $\T^2$
defined by
\[ \xi(x;\o) = \sum_{n\in \Z^2} g_n(\o) e^{in\cdot x},\]

\noi
where 
$\{g_{ n} \}_{n \in \Z^2}$
is a sequence of independent standard complex-valued Gaussian
random variables
conditioned that $g_{ -n} = \cj{g}_{n}$, $n \in \Z^2$.
It is easy to see that $\xi \in \H^s(\T^2) \setminus \H^{-1}(\T^2)$, $s < -1$, almost surely.
In particular, $\xi$ is a distribution, acting  on smooth functions.
In fact, the action of $\xi$ can be defined on $L^2(\T^2)$.
Namely, we define the white noise functional  $W_{(\cdot)}: L^2(\T^2) \to L^2(\O)$
 by 
\begin{equation}
 W_f (\o) = \jb{f, \xi(\o)}_{L^2} = \sum_{n \in \Z^2} \ft f(n) \cj{g}_n(\o)
\label{W0}
 \end{equation}

\noi
for a real-valued function $f \in L^2(\T^2)$.
Note that $W_f=\xi(f)$ is basically the Wiener integral of $f$.
In particular, 
$W_f$ is a real-valued Gaussian random variable
with mean 0 and variance $\|f\|_{L^2}^2$.
Moreover, 
$W_{(\cdot)}$ is unitary:
\begin{align}
 \E\big[ W_f W_h ] = \jb{f, h}_{L^2_x}
 \label{W0a}
\end{align}

\noi
for $f, h \in L^2(\T^2)$.
More generally, we have the following; see \cite{ORT}.
\begin{lemma}\label{LEM:W1}
Let $f, h \in L^2(\T^2)$ such that $\|f\|_{L^2} = \|h\|_{L^2} = 1$.
Then, for $k, m \in \N\cup\{0\}$, we have 
\begin{align*}
\E\big[ H_k(W_f)H_m(W_h)\big]
=  \dl_{km} k! [\jb{f, h}_{L^2(\T^2)}]^k.
\end{align*}
Here, $\dl_{km}$ denotes the Kronecker's delta function.
\end{lemma}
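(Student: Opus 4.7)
The plan is to deduce the identity from a generating function comparison, which is the standard Wick/Hermite calculus trick. The main observation is that $(W_f, W_h)$ forms a centered Gaussian vector whose covariance is computable explicitly from the unitarity \eqref{W0a}: specifically, $\mathrm{Var}(W_f) = \|f\|_{L^2}^2 = 1$, $\mathrm{Var}(W_h) = 1$, and $\mathrm{Cov}(W_f, W_h) = \langle f, h\rangle_{L^2}$. Hence, for any $t,s \in \R$, the random variable $tW_f + sW_h$ is Gaussian with mean zero and variance $t^2 + s^2 + 2ts\langle f,h\rangle_{L^2}$, and its moment generating function is
\begin{equation*}
\E\big[ e^{tW_f + sW_h}\big] \;=\; \exp\!\Big(\tfrac{1}{2}(t^2+s^2) + ts\,\langle f,h\rangle_{L^2}\Big).
\end{equation*}

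Next, I specialize the Hermite generating function \eqref{H1} with $\sigma = 1$, so that $e^{tx - t^2/2} = \sum_{k\geq 0}\frac{t^k}{k!}H_k(x)$. Plugging $x=W_f$ and then $x = W_h$ with parameter $s$, multiplying, and taking expectations gives, on the one hand,
\begin{equation*}
\E\big[e^{tW_f - t^2/2}\, e^{sW_h - s^2/2}\big] \;=\; e^{-(t^2+s^2)/2}\,\E\big[e^{tW_f+sW_h}\big] \;=\; e^{ts\,\langle f,h\rangle_{L^2}},
\end{equation*}
and on the other hand, by expanding both factors and using Fubini (justified by absolute convergence, since the generating series converges in $L^2(\mathbb{P})$ and is entire in $(t,s)$),
\begin{equation*}
\E\big[e^{tW_f - t^2/2}\, e^{sW_h - s^2/2}\big] \;=\; \sum_{k,m\geq 0} \frac{t^k s^m}{k!\,m!}\,\E\big[H_k(W_f)H_m(W_h)\big].
\end{equation*}

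Finally, expanding the right-hand side of the first identity as $e^{ts\langle f,h\rangle_{L^2}} = \sum_{k \geq 0} \frac{(ts)^k}{k!}\langle f,h\rangle_{L^2}^k$ and equating coefficients of $t^k s^m$ in the two resulting double power series yields $\frac{1}{k!\,m!}\E[H_k(W_f)H_m(W_h)] = \delta_{km}\frac{1}{k!}\langle f,h\rangle_{L^2}^k$, which is exactly the claim. There is no real obstacle here; the only point to verify carefully is the interchange of expectation and summation, which follows from the fact that the Hermite generating series for a standard Gaussian converges in $L^p$ for all $p<\infty$, so the product series converges in $L^1(\mathbb{P})$ uniformly for $(t,s)$ in a neighborhood of the origin.
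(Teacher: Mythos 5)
Your proof is correct and is exactly the standard generating-function argument; the paper does not prove this lemma itself but cites \cite{OT2020}*{Lemma 2.1}, whose proof proceeds in the same way (joint Gaussianity of $(W_f,W_h)$ via $tW_f+sW_h=W_{tf+sh}$, comparison of $\E[e^{tW_f-t^2/2}e^{sW_h-s^2/2}]=e^{ts\jb{f,h}}$ with the double Hermite expansion, and matching coefficients). The interchange of sum and expectation is justified as you indicate, e.g.\ by the a priori bound $|\E[H_k(W_f)H_m(W_h)]|\leq \sqrt{k!\,m!}$, which makes the double series absolutely summable.
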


For the sake of conciseness, let us denote $u_N=\P_N u$. By using the definitions, we have 
\begin{align}
u_N (x) = \s_{\al,N}^{1/2} \frac{u_N(x)}{\s_{\al,N}^{1/2}}
= \s_{\al,N}^{1/2}W_{\eta_{\al,N}(x)},
\label{PStr4}
\end{align}
and by using the the definition of the Wick ordering and the property \eqref{eq:hermite:hom}, we easily get from \eqref{PStr4} that
\begin{equation}\label{eq:uN:hermite}
 :\! u_N^{2m+2}(x)\! : \,=H_{2m+2}(\s_{\al,N}^{1/2}W_{\eta_{\al,N}(x)}, \s_{\al,N})=\s_{\al,N}^{m+1}H_{2m+2}(W_{\eta_{\al,N}(x)})
\end{equation}

\noi We are in position to prove Proposition  \ref{prop:conv:G}  for $p=2$.
\begin{proof}[Proof of Proposition  \ref{prop:conv:G}] By definition of \eqref{eq:def:GN}, we write, by expanding  the square,
\begin{align}\nonumber 
\|G_M(u)-G_N(u)\|_{L^2(\mu_{\al})}^2&=\frac{1}{(2m+2)^2}\int_\O\left(\int_{\T^2}:\! u_M^{2m+2}\! : dx\right)^2 dP(\o)\\\nonumber
&-\frac{2}{(2m+2)^2}\int_\O\iint_{\T^2\times \T^2}:\! u_N^{2m+2}(x)\! : :\! u_M^{2m+2}(y)\! :dx\, dy\, dP(\o)\\\label{diff:g}
&+\frac{1}{(2m+2)^2}\int_\O\left(\int_{\T^2}:\! u_N^{2m+2}\! : dx\right)^2dP(\o).
\end{align}
At this point we write $(\int_{\T^2}\cdot\,dx)^2=(\int_{\T^2}\cdot\,dx)(\int_{\T^2}\cdot\,dy)$, we rewrite $:\! u_N^{2m+2}\! :$ using \eqref{eq:uN:hermite}, and by means of Lemma \ref{LEM:W1} we obtain
\begin{align*}
\eqref{diff:g}&=\frac{\s_{\al, M}^{2m+2}}{(2m+2)^2}\int_\O\iint_{\T^2\times \T^2}H_{2m+2}(W_{\eta_{\al,M}(x)})H_{2m+2}(W_{\eta_{\al,M}(y)})\,dx\,dy\,dP(\o)\\
&-\frac{2\s_{\al, M}^{m+1}\s_{\al, N}^{m+1}}{(2m+2)^2}\int_\O\iint_{\T^2\times \T^2}H_{2m+2}(W_{\eta_{\al,M}(x)})H_{2m+2}(W_{\eta_{\al,N}(y)})\,dx\,dy\,dP(\o)\\
&+\frac{\s_{\al, N}^{2m+2}}{(2m+2)^2}\int_\O\iint_{\T^2\times \T^2}H_{2m+2}(W_{\eta_{\al,N}(x)})H_{2m+2}(W_{\eta_{\al,N}(y)})\,dx\,dy\,dP(\o)\\
&=\frac{(2m+2)!\s_{\al, M}^{2m+2}}{(2m+2)^2}\iint_{\T^2\times\T^2}\jb{\eta_{\al,M}(x), \eta_{\al,M}(y)}^{2m+2}\,dx\,dy\\
&-\frac{(2m+2)!\s_{\al, N}^{2m+2}}{(2m+2)^2}\iint_{\T^2\times\T^2}\jb{\eta_{\al,N}(x), \eta_{\al,N}(y)}^{2m+2}\,dx\,dy\\
&=\frac{(2m+2)!}{(2m+2)^2}\iint_{\T^2\times\T^2}\g_{\al,M}^{2m+2}(x-y)-\g^{2m+2}_{\al,N}(x-y)\,dx\,dy.
\end{align*}
By using the finiteness of the measure of the toroidal domain, the bound $||a|^\beta-|b|^\beta |\leq c(\beta)|a-b|(|a|^{\beta-1}+|b|^{\beta-1})$ for $\beta>1$,  and the  H\"older's inequality with conjugate exponents $p=2m+2$ and $p'=\frac{2m+2}{2m+1}$, we estimate 
\begin{align}\notag
&\iint_{\T^2\times\T^2}\g_{\al,M}^{2m+2}(x-y)-\g^{2m+2}_{\al,N}(x-y)\,dx\,dy=c\int_{\T^2}\g_{\al,M}^{2m+2}(x)-\g^{2m+2}_{\al,N}(x)\,dx\\\notag
&\leq \tilde c(m)\int_{\T^2}(|\g_{\al,M}(x)|^{2m+1}+|\g_{\al,N}(x)|^{2m+1})(|\g_{\al,M}(x)-\g_{\al,N}(x)|)\,dx\\\label{new-opt}
&\leq \tilde c(m)\|\g_{\al,M}-\g_{\al,N}\|_{ L^{2m+2}}\left(\|\g_{\al,M}\|_{ L^{2m+2}}^{2m+1}+\|\g_{\al,N}\|_{  L^{2m+2}}^{2m+1}\right).
\end{align}
 By using the Hausdorff-Young's inequality to control the $L^{2m+2}$-norms, and the Sobolev's embedding $H^{m/(m+1)}\hookrightarrow L^{2m+2}$ to estimate the difference norm, we obtain
\begin{align*}
\eqref{new-opt}&\leq 2\tilde c(m) \left(\sum_{|n|\geq N}\frac{1}{\jb{n}^{4\al-2m/(m+1)}} \right)^{1/2} \left(\sum_{n\in\Z^2} \frac{1}{\jb{n}^{\frac{4\al(m+1)}{2m+1}}} \right)^{(2m+1)^2/(2m+2)}\\
&\lesssim N^{1-2\al+\frac{m}{m+1}},
\end{align*}
and the last bound goes to zero provided that 
\begin{equation}\label{al:restr}
\al>1-\frac{1}{2m+2}=\, : \al(m),
\end{equation}
indeed $\sum \jb{n}^{-\frac{4\al(m+1)}{2m+1}} <\infty$ for $\al> 1-\frac{1}{2m+2}$.
By summarizing the above calculations, we get that under the condition \eqref{al:restr} on $\al,$
\[
\|G_M(u)-G_N(u)\|_{L^2(\mu_{\al})}^2\lesssim \tilde c(m)N^{ 1-2\al+\frac{m}{m+1}},
\]
which in turn easily implies \eqref{eq:cauchy:G} when $p=2$.
\end{proof}


\subsection{Exponential integrability}\label{sec:variational}

This section is devoted to the proof of Theorem \ref{thm:exp:int}.
In this proof, we use the variational approach to Quantum Field Theory introduced in \cite{BG}. See also \cite{OST2020, GOTW, STX, LiWa} for other recent results using this approach.\medskip 

We begin by introducing some notations. Let $W(t)$ be a cylindrical Brownian motion in $L^2(\T^2)$.
Namely, we have
\begin{align}
W(t) = \sum_{n \in \Z^2} B_n(t) e_n,
\label{P1}
\end{align}

\noi
where  
$\{B_n\}_{n \in \Z^2}$ is a sequence of mutually independent complex-valued Brownian motions such that 
$ B_{-n}=\cj{B}_n$, $n \in \Z^2$. Here, by convention, we normalize $B_n$ such that $\text{Var}(B_n(t)) = t$. In particular, $B_0$ is  a standard real-valued Brownian motion.
Then, we define a centered Gaussian process $Y(t)$
by 
\begin{align}
Y(t)
=  \jb{\nabla}^{-\al}W(t).
\label{cent-gp}
\end{align}

\noi
Note that 
we have $\Law(Y(1)) = \mu_{\al}$, 
where $\mu_{\al}$ is the Gaussian measure in \eqref{G4}.
By setting  $Y_N =\P_NY $, 
we have   $\Law(Y_N(1)) = (\P_N)_*\mu_{\alpha}$, 
i.e.~the pushforward of $\mu_{\al}$ under $\P_N$.
In particular, 
we have  $\E [Y_N^2(1)] = \s_{\al,N}$,
where $\s_{\al,N}$ is as in~\eqref{G6}. \medskip

Next, let $\Ha$ denote the space of drifts, 
which are progressively measurable processes 
belonging to 
$L^2([0,1]; L^2(\T^2))$, $P$-almost surely. 
We now state the  Bou\'e-Dupuis variational formula \cite{BD, Ustunel}. Specifically, we refer to \cite[Theorem 7]{Ustunel}.

\begin{lemma}[\cite{BD, Ustunel}]\label{LEM:var3}
Let $Y$ be as in \eqref{cent-gp}.
Fix $N \in \N$.
Suppose that  $F:C^\infty(\T^2) \to \R$
is measurable and such that $\E\big[|F(\P_NY(1))|^p\big] < \infty$
and $\E\big[|e^{-F(\P_NY(1))}|^q \big] < \infty$ for some $1 < p, q < \infty$ with $\frac 1p + \frac 1q = 1$.
Then, we have
\begin{align}
- \log \E\Big[e^{-F(\P_N Y(1))}\Big]
= \inf_{\dr \in \mathbb H_a}
\E\bigg[ F(\P_N Y(1) + \P_N I(\dr)(1)) + \frac{1}{2} \int_0^1 \| \dr(t) \|_{L^2_x}^2\,dt \bigg], 
\label{P3}
\end{align}

\noi
where  $I(\dr)$ is  defined by 
\begin{align*}
 I(\dr)(t) = \int_0^t \jb{\nabla}^{-\al} \dr(t')\, dt'
\end{align*}
\noi
and the expectation $\E = \E_P $
is an expectation with respect to the underlying probability measure~$P $. 
\end{lemma}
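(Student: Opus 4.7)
The plan is to establish~\eqref{P3} via the standard two-sided approach to the Bou\'e--Dupuis formula, combining the Gibbs variational principle (entropy duality) with Girsanov's theorem for the upper bound, and the martingale representation theorem for the matching lower bound. A useful preliminary remark is that since $F$ sees only the $\P_N$-projection of $Y(1)+I(\theta)(1)$, the infimum is effectively taken over drifts with $\P_N\theta=\theta$: any high-frequency component of $\theta$ enlarges the cost $\tfrac12\int_0^1\|\theta\|_{L^2_x}^2\,dt$ without affecting the $F$-term. This reduces the effective spatial degrees of freedom to the finitely many modes $\{|n|\leq N\}$, so the only genuinely infinite-dimensional feature is the time variable and its progressive measurability.

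For the upper bound ($\leq$), fix $\theta\in\Ha$ (first under a cutoff so that Novikov's condition holds) and form the stochastic exponential
\[
\mathcal E_t(\theta):=\exp\bigg(\int_0^t\jb{\theta(s),\,dW(s)}_{L^2_x}-\tfrac12\int_0^t\|\theta(s)\|_{L^2_x}^2\,ds\bigg).
\]
Define $Q_\theta$ by $dQ_\theta/dP:=\mathcal E_1(\theta)$. By Girsanov, $\tilde W(t):=W(t)-\int_0^t\theta(s)\,ds$ is a cylindrical Brownian motion under $Q_\theta$, so $\tilde Y:=\jb{\nabla}^{-\al}\tilde W$ has the same law under $Q_\theta$ as $Y$ under $P$, while $Y(1)=\tilde Y(1)+I(\theta)(1)$ as random fields. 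The Donsker--Varadhan entropy inequality then yields
\[
-\log\E_P\bigl[e^{-F(\P_N Y(1))}\bigr]\leq \E_{Q_\theta}\bigl[F(\P_N Y(1))\bigr]+H(Q_\theta\mid P),
\]
with $H$ the relative entropy. Since $\int_0^1\jb{\theta,d\tilde W}$ is a $Q_\theta$-martingale, one computes $H(Q_\theta\mid P)=\tfrac12\E_{Q_\theta}\big[\int_0^1\|\theta\|_{L^2_x}^2\,dt\big]$. Substituting $\P_N Y(1)=\P_N\tilde Y(1)+\P_N I(\theta)(1)$ and taking the infimum over $\theta$ gives the $\leq$ direction. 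The provisional Novikov assumption is then removed by truncating $\theta$ to $\theta\,\ind_{\|\theta\|\le K}$, proving the bound for the truncation, and passing $K\to\infty$ by monotone convergence.

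For the matching lower bound ($\geq$), the Gibbs variational principle is saturated by the tilted measure $dQ^*/dP:=e^{-F(\P_N Y(1))}/Z$ with $Z:=\E_P[e^{-F(\P_N Y(1))}]$. The hypotheses $\E[|F(\P_N Y(1))|^p]<\infty$ and $\E[e^{-qF(\P_N Y(1))}]<\infty$ with $\tfrac1p+\tfrac1q=1$ imply, by H\"older, that $F\in L^1(Q^*)$ and $H(Q^*\mid P)<\infty$. To realize $Q^*$ as a Girsanov transform, apply the martingale representation theorem (valid for the cylindrical Brownian filtration generated by $W$) to the positive martingale $M_t:=\E_P[dQ^*/dP\mid\F_t]$: there exists a progressively measurable $L^2_x$-valued process $\psi$ with $dM_t=M_t\,\jb{\psi_t,dW(t)}_{L^2_x}$, so $M_1=\mathcal E_1(\psi)$. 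Setting $\theta^*:=\psi$ and applying It\^o's formula to $\log M_t$ yields the entropy--energy identity $H(Q^*\mid P)=\tfrac12\E_{Q^*}\big[\int_0^1\|\theta^*\|_{L^2_x}^2\,dt\big]$; plugged back into the Gibbs principle, this closes the inequality and exhibits $\theta^*$ as the minimizer.

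The principal obstacle is verifying that the optimal drift $\theta^*$ actually lies in $\Ha$, i.e., that $\int_0^1\|\theta^*(t)\|_{L^2_x}^2\,dt<\infty$ $P$-a.s.\ with the required progressive measurability. This is where the sharp integrability hypothesis on $F$ is used: the $L^p/L^q$ duality ensures $H(Q^*\mid P)<\infty$, which via the It\^o identity upgrades to the required $L^2([0,1];L^2_x)$-bound on $\theta^*$. A secondary technicality is justifying Girsanov in the upper bound for drifts without an a priori Novikov bound, handled by the truncation--monotone convergence argument described above. Both points are carried out in detail in~\cite{BD,Ustunel}, whose statement we adopt.
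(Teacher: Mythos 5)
First, a point of comparison: the paper does not prove Lemma \ref{LEM:var3} at all --- it is imported verbatim from \cite{BD} and \cite{Ustunel}*{Theorem 7} --- so there is no in-paper argument to measure your sketch against. Your overall architecture (Donsker--Varadhan entropy duality plus a change of measure for one inequality, martingale representation plus It\^o for the other) is the standard route to the Bou\'e--Dupuis formula; your observation that one may restrict to drifts with $\P_N\dr=\dr$ is correct, since $\P_N$ commutes with $\jb{\nabla}^{-\al}$ and with the time integral defining $I(\dr)$; and your lower bound (tilted measure $Q^*$, H\"older to get $F\in L^1(Q^*)$ and $H(Q^*\,|\,P)<\infty$, representation of the density as a stochastic exponential) is sound, modulo the usual caveat that the representing drift is adapted to the filtration of $W$, which is a priori larger than that of the new Brownian motion $\wt W$; this weak-versus-strong formulation issue is exactly what is settled in \cite{BD}.

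There is, however, a genuine directional error in your upper bound. The measure $Q_\dr$ with $dQ_\dr/dP=\mathcal{E}_1(\dr)$ \emph{removes} the drift rather than inserting it: under $Q_\dr$ the pair $(\wt W,\dr)$ consists of a Brownian motion together with a process $\dr$ that is still a functional of $W=\wt W+\int_0^\cdot\dr_s\,ds$, so the joint law of $(\wt W,\dr)$ under $Q_\dr$ is \emph{not} the joint law of $(W,\dr)$ under $P$ unless $\dr$ is deterministic (for $\dr_t=W_t$ in one dimension the second component becomes an Ornstein--Uhlenbeck-type process under $Q_\dr$). Consequently the quantity your chain of inequalities produces, namely $\E_{Q_\dr}\big[F(\P_N\wt Y(1)+\P_N I(\dr)(1))\big]+\tfrac12\E_{Q_\dr}\int_0^1\|\dr(t)\|_{L^2_x}^2\,dt$, does not coincide with the term $\E_{P}\big[F(\P_N Y(1)+\P_N I(\dr)(1))\big]+\tfrac12\E_{P}\int_0^1\|\dr(t)\|_{L^2_x}^2\,dt$ being infimized in \eqref{P3}, and so you have not bounded $-\log\E[e^{-F}]$ by the latter for the given $\dr$, which is what the $\leq$ direction requires for \emph{every} $\dr\in\Ha$. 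The standard fix is to take $Q$ to be the \emph{pushforward} of $P$ under the adapted shift $\o\mapsto W(\o)+\int_0^\cdot\dr_s(\o)\,ds$, so that $\E_Q[F(\P_N Y(1))]=\E_P[F(\P_N Y(1)+\P_N I(\dr)(1))]$ holds by construction, combined with the entropy \emph{inequality} $H(Q\,|\,P)\leq\tfrac12\E_P\int_0^1\|\dr(t)\|_{L^2_x}^2\,dt$ --- an inequality rather than the identity you assert, because the shift need not be invertible and restricting to the image filtration only decreases relative entropy. With that replacement the Donsker--Varadhan step closes the $\leq$ direction, and the remaining technicalities (Novikov via truncation, localization of the stochastic integrals) are indeed handled in \cite{BD} and \cite{Ustunel} as you indicate.
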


In the following,
we construct a drift $\dr$ depending on $Y$,
and 
 the  Bou\'e-Dupuis variational formula 
(Lemma \ref{LEM:var3}) is suitable for this purpose
since 
an  expectation in  \eqref{P3} is taken
with respect to  the underlying probability $P $.

Before proceeding to the proof of Theorem \ref{thm:exp:int}, 
we state a lemma on the  path-wise regularity bounds  of 
$Y(1)$ and $I(\dr)(1)$.

\begin{lemma}  \label{LEM:Dr}
	
\textup{(i)} 
Given $h\in\N$, let $1>\al>1-\frac1h$. Let  $\eps > 0$. Then, given any finite $p \ge 1$, 
there exists $C=C(p,h,\eps)>0$ such that
\begin{align*}
\begin{split}
\E 
\Big[
 \|:\!Y_N(1)^h\!:\|_{W^{-h(1-\al)-\eps,\infty}}^p
\Big]
\leq C,
\end{split}
\end{align*}

\noi
uniformly in $N \in \N$.
	
\smallskip
	
\noi
\textup{(ii)} For any $\dr \in \Ha$, we have
\begin{align}
\| I(\dr)(1) \|_{H^{\al}}^2 \leq \int_0^1 \| \dr(t) \|_{L^2_x}^2\,dt.
\label{CM}
\end{align}
\end{lemma}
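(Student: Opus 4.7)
The two parts are essentially independent: (i) is a single-time Gaussian regularity estimate obtained from the analysis already carried out in Section 2.2, while (ii) is the standard Cameron--Martin embedding adapted to our fractional setting.

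For part (i), I would mimic the proof of Proposition \ref{PROP:sto1}(i) restricted to the time slice $t=1$. Since $\Law(Y_N(1)) = (\P_N)_\ast \mu_\al$, the random variable $:\!Y_N(1)^h\!:$ belongs to the Wiener chaos $\mathcal H_h$ and its Fourier coefficients admit a representation of the type \eqref{A1}--\eqref{A4}. Applying Proposition \ref{PROP:main} followed by the discrete convolution bound of Lemma \ref{LEM:SUM} (iterated $h-1$ times, which is legitimate as soon as $\al>1-\tfrac1h$, a harmless restriction) produces
\[
\E\Big[\big|\widehat{:\!Y_N(1)^h\!:}(n)\big|^2\Big] \lesssim \sum_{\substack{n=n_1+\cdots+n_h\\ |n_j|\le N}} \prod_{j=1}^{h}\frac{1}{\jb{n_j}^{2\al}}\lesssim \jb{n}^{-2+2h(1-\al)},
\]
with an implicit constant independent of $N$. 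Part (i) of Lemma \ref{lem:reg} then yields a uniform-in-$N$ bound of $:\!Y_N(1)^h\!:$ in $W^{s,\infty}(\T^2)\cap B^s_{\infty,\infty}(\T^2)$ for any $s<-h(1-\al)$, at the $L^2(\O)$ level. To upgrade to $L^p(\O)$ for arbitrary finite $p$ uniformly in $N$, I would run the classical route through a Besov norm $B^{-h(1-\al)-\eps/2}_{p_0,p_0}$ with $p_0$ large (which embeds into $W^{-h(1-\al)-\eps,\infty}$ after absorbing a tiny loss in $\eps$), apply Minkowski to pull out $L^p(\O)$ past the Littlewood--Paley sum, and then invoke the Wiener chaos estimate (Lemma \ref{LEM:hyp}) to trade $L^p(\O)$ for $L^2(\O)$ at the cost of a factor $(p-1)^{h/2}$. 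Combining with the above Fourier bound closes the argument.

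For part (ii), the computation is direct: since $\jb{\nabla}^{-\al}$ commutes with integration in time,
\[
\jb{\nabla}^\al I(\dr)(1) = \int_0^1 \dr(t')\,dt',
\]
and therefore by Minkowski's integral inequality followed by Cauchy--Schwarz on $[0,1]$,
\[
\|I(\dr)(1)\|_{H^\al}^2 = \Big\|\int_0^1 \dr(t')\,dt'\Big\|_{L^2_x}^2 \le \Big(\int_0^1 \|\dr(t')\|_{L^2_x}\,dt'\Big)^2 \le \int_0^1 \|\dr(t')\|_{L^2_x}^2\,dt',
\]
which is precisely \eqref{CM}.

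The only mildly delicate point is in (i), namely making sure that the passage from the $L^2(\O)$-level Gaussian bound to the $L^p(\O)$-level bound in the endpoint norm $W^{-h(1-\al)-\eps,\infty}$ is uniform in $N$. This is why one works with a Besov norm first and then embeds, absorbing a sliver of regularity into $\eps$; with that standard maneuver the hypercontractivity supplies the $N$-uniform constant $C_{\eps,p,h,\al}$ claimed in the statement.
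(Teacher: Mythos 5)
Your proposal is correct and follows essentially the same route as the paper: for (i), both reduce to the second-moment bound $\E[|\widehat{:\!Y_N(1)^h\!:}(n)|^2]\lesssim \jb{n}^{-2+2h(1-\al)}$ via the Wiener chaos structure and iterated application of Lemma \ref{LEM:SUM} (under the same implicit restriction $\al>1-\tfrac1h$), and then pass to the uniform $L^p(\O)$ bound in $W^{-h(1-\al)-\eps,\infty}$ by hypercontractivity and the Besov-type argument encapsulated in Lemma \ref{lem:reg}; for (ii), your Minkowski-plus-Cauchy--Schwarz computation is identical to the paper's.
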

\begin{proof} 
(i) Part (i) is a straightforward adaptation of \cite[Lemma B3]{STX}. We include its proof here for completeness. 

Recall that the law of $Y_N(1)$ is $\sum_{ |n|\leq N} \frac{g_{1,n}^\omega}{\jb{n}^\al}e^{in\cdot x}$. Note that $:\!Y_N(1)^h\!:$ belongs the Wiener chaos  of order $h$. 
Direct computations yield 
\begin{align*}
\E \Big[
|\widehat{:\!Y_N(1)^h\!:}(n)|^2\Big]&=h!\sum_{n_1+\dots+n_h=n}\prod_{i = 1}^h \E\Big[ |\widehat{ Y_N(1)}(n_i)  |^2\Big]\\
&\lesssim \sum_{n_1+\dots+n_h=n}\prod_{i = 1}^h \frac{1}{\jb{n_i}^{2\al}}\lesssim \jb{n}^{-2+2h(1-\al)},
\end{align*}
where we iteratively used Lemma \ref{LEM:SUM} under the assumption $\al>1-\frac1h$. Lemma \ref{lem:reg} then gives the desired result. \smallskip

\noi (ii) The second point is a simple application of the definition of the Sobolev space $H^\al$, the Minkowski's inequality, and the Cauchy-Schwarz's inequality:
\[
\| I(\dr)(1) \|_{H^{\al}}=\Big\|\int_0^1\dr(t)\,dt\Big\|_{L^2}\leq \int_0^1\| \dr(t)\|_{L^2}\,dt
\leq \Big( \int_0^1 \| \dr(t)\|_{L^2}^2\,dt \Big)^{1/2},
\]
then the result follows by squaring both sides.
\end{proof}

\subsection{Proof of Theorem \ref{thm:exp:int}}

We now give the proof of Theorem \ref{thm:exp:int}. 
We  prove  the uniform exponential integrability 
via the variational formulation, i.e. by means of Lemma \ref{LEM:var3}. 
Since the argument is identical for any finite $p \geq 1$, we only present details for the case $p =1$. \smallskip

In view of the Bou\'e-Dupuis formula (Lemma \ref{LEM:var3}), 
it suffices to  establish a  lower bound on 
\begin{equation}
\W_N(\dr) = \E
\bigg[G_N(Y_N(1) + P_N I(\dr)(1)) + \frac{1}{2} \int_0^1 \| \dr(t) \|_{L^2_x}^2\,dt \bigg], 
\label{v_N0}
\end{equation}

\noi 
uniformly in $N \in \N$ and  $\dr \in \Ha$.
We set 
$\Dr_N : \, =  \P_N I(\dr)(1)$.
From \eqref{eq:def:GN} and \eqref{Herm:sum}, we have
\begin{align}\label{Y0}
G_N (Y_N + \Dr_N)   = \sum_{\l=0}^{2m+2}\begin{pmatrix}
2m+2 \\ \l
\end{pmatrix}\int_{\T^2}:\! Y_N^{2m+2-\l} \!:\Dr_N^{\l}\,dx 
\end{align}

\noi
Hence, from  \eqref{v_N0} and \eqref{Y0}, we have
\begin{align}
\begin{split}
\W_N(\dr)
&=\E
\bigg[\sum_{\l=0}^{2m+2}\begin{pmatrix}
2m+2 \\ \l
\end{pmatrix}\int_{\T^2}:\! Y_N^{2m+2-\l} \!:\Dr_N^{\l}\,dx 
+ \frac{1}{2} \int_0^1 \| \dr(t) \|_{L^2_x}^2\,dt 
\bigg].
\end{split}
\label{v_N0a}
\end{align}

\begin{remark}
\rm
For $\l=2m+2,$ the term $\int_{\T^2}\Dr_N^{2m+2}$ is non-negative and well defined. Indeed, in view of the Sobolev embedding $H^\al(\T^2)\subset L^{2m+2}(\T^2)$ and of \eqref{CM},  we have that $\|\Dr_N\|^{2m+2}_{L^{2m+2}}\lesssim \|\Dr_N\|^{2m+2}_{H^{\alpha}}\lesssim  \left(\int_0^1 \| \dr(t) \|_{L^2}^2dt\right)^{m+1}$ for any $\alpha\geq1-\frac{1}{m+1}$.
\end{remark}

\noi In the following, we first state 
a lemma, controlling the  terms appearing in \eqref{v_N0a}.

\begin{lemma} \label{LEM:Dr2:m} 
Fix $m\geq1$ and let $\l\in\{1,\dots, 2m+1\}.$ Let $\al>1-\frac{1}{2m+2}$. There exist $\eps>0$ and exponents $p_\l>1$ such that for any $\eta\ll 1$ there exists $c=c(\eta)>0$ such that the following holds:
\begin{equation}
\bigg| \int_{\T^2}  :\! Y_N^{2m+2-\l} \!:  \Dr_N^\l \, dx \bigg|\le c 
\|  :\! Y_N^{2m+2-\l} \!:  \|_{W^{-(2m+2-\l)(1-\al)-\eps,\infty}}^{p_\l} +\eta \left(\|\Dr_N\|_{H^\al}^{2}+\|\Dr_N\|_{L^{2m+2}}^{2m+2}\right).
\label{YY3bis}
\end{equation}
\end{lemma}

\begin{proof} 
Let 
\begin{equation}\label{cons0}
\eps<\min\left\{\Big(\al-\frac{m}{m+1}\Big)\cdot \frac{m+1}{m}, (2m+2)\Big(\al-\Big(1-\frac{1}{2m+2}\Big)\Big)\right\}.
\end{equation}
We start by noticing that 
\begin{equation}\label{cons1}
\bigg| \int_{\T^2}  :\! Y_N^{2m+2-\l} \!:  \Dr_N^\l \, dx \bigg|
\lesssim \| :\! Y_N^{2m+2-\l} \!:  \|_{W^{-(2m+2-\l)(1-\al)-\eps,\infty}}
\| \Theta_N^\l \|_{W^{(2m+2-\l)(1-\al)+\eps,1}}.
\end{equation}

\noi
Then, using the boundedness of the spatial domain, we bound from above $\| \Theta_N^\l \|_{W^{(2m+2-\l)(1-\al)+\eps,1}}$ by $\| \Theta_N^\l \|_{W^{(2m+2-\l)(1-\al)+\eps,1+\delta_\l}}$, where
\begin{equation}\label{eq:cons2}
\delta_\l:=\frac{(\al-\frac{m}{m+1})(m+1)(2m+2-\l)-\eps m}{\l(\al-\frac{m}{m+1})(m+1)+2m(m+1)(1-\al)+\eps m}.
\end{equation}
Notice that, in view of our choice of $\eps$ in \eqref{cons0}, $\delta_\l>0$. 

Then, using the fractional Leibniz rule in Lemma \ref{LEM:prod}, we have that
\begin{align}\label{cons3}
\| \Theta_N^\l \|_{W^{(2m+2-\l)(1-\al)+\eps,1+\delta_\l}}
\lesssim \|\Theta_N\|_{W^{(2m+2-\l)(1-\al)+\eps,\frac{(1+\delta_\l)(2m+2)}{2m+2-(\l-1)(1+\delta_\l)}}}\|\Theta_N\|_{L^{2m+2}}^{\l-1}.
\end{align}

Using the interpolation of Sobolev spaces and our choice of $\delta_\l$ in \eqref{eq:cons2}, we have that
\begin{align}\label{cons4}
 \|\Theta_N\|_{W^{(2m+2-\l)(1-\al)+\eps,\frac{(1+\delta_\l)(2m+2)}{2m+2-(\l-1)(1+\delta_\l)}}}\|
 \lesssim \|\Theta_N\|_{H^\al}^{\gamma_\l}\|\Theta_N\|_{L^{2m+2}}^{1-\gamma_\l},
\end{align}
where
\begin{align*}
\gamma_\l:=\frac{(2m+2-\l)(1-\al)+\eps}{\al}.
\end{align*}
Clearly $\gamma_\l>0$ and our choice of $\eps$ in \eqref{cons0} insures that we also have $\gamma_\l<1$.

Combining together equations \eqref{cons3} and \eqref{cons4}, we have that
\begin{align}\label{cons5}
 \| \Theta_N^\l \|_{W^{(2m+2-\l)(1-\al)+\eps,1+\delta_\l}}
 \lesssim \|\Theta_N\|_{H^\al}^{\gamma_\l}\|\Theta_N\|_{L^{2m+2}}^{\l-\gamma_\l}.
\end{align}
In turn, combining \eqref{cons1} and \eqref{cons5}, we obtain that
\begin{align}\label{cons6}
\bigg| \int_{\T^2}  :\! Y_N^{2m+2-\l} \!:  \Dr_N^\l \, dx \bigg|
\lesssim \| :\! Y_N^{2m+2-\l} \!:  \|_{W^{-(2m+2-\l)(1-\al)-\eps,\infty}}
 \|\Theta_N\|_{H^\al}^{\gamma_\l}\|\Theta_N\|_{L^{2m+2}}^{\l-\gamma_\l}.
\end{align}

Using the generalized Young's inequality, i.e. $ab\leq c(\eta)a^p+\eta b^q$ for $\eta\ll 1$ and $p, q>1$ satisfying $\frac 1p+\frac 1q=1$,
with $p_\l:=\frac{2m+2}{2m+2-\l-\gamma m}$ and $q_\l:=\frac{2m+2}{\gamma m +\l}$, we obtain
\begin{align*}
\bigg| \int_{\T^2}  :\! Y_N^{2m+2-\l} \!:  \Dr_N^\l \, dx \bigg|
\lesssim c(\eta)\| :\! Y_N^{2m+2-\l} \!:  \|_{W^{-(2m+2-\l)(1-\al)-\eps,\infty}}^{p_\l}
+\eta \|\Theta_N\|_{H^\al}^{\gamma_\l q_\l}\|\Theta_N\|_{L^{2m+2}}^{(\l-\gamma_\l)q_\l}.
\end{align*}

\noi
Notice that our choice of $\eps$ in \eqref{cons0} ensures that $q_\l>1$ (and so, also $p_\l>1$) for all $\l=1,2,\dots, 2m+1$. 

Lastly, using once again Young's inequality on the product $\|\Theta_N\|_{H^\al}^{\gamma_\l q_\l}\|\Theta_N\|_{L^{2m+2}}^{(\l-\gamma_\l)q_\l}$ with the conjugate exponents $\tilde p_\l:=\frac{2}{\gamma_\l q_\l}$ and
$\tilde q_\l:=\frac{2m+2}{(\l-\gamma_\l)q_\l}$, we conclude that 
\eqref{YY3bis} holds.
\end{proof}

We can now prove Theorem \ref{thm:exp:int}.
\begin{proof}[Proof of Theorem \ref{thm:exp:int}] 
Recalling the definition of $\W_N(\dr)$ in \eqref{v_N0a},
%
we observe that
\begin{align*} 
\W_N(\dr)
&=\E\bigg[\int_{\T^2}:\! Y_N^{2m+2} \!:dx \bigg]
+\E
\bigg[\sum_{\l=1}^{2m+1}\begin{pmatrix}
2m+2 \\ \l
\end{pmatrix}\int_{\T^2}:\! Y_N^{2m+2-\l} \!:\Dr_N^{\l}\,dx 
\bigg]\\
&+\E \bigg[ \|\Dr_N\|_{L^{2m+2}}^{2m+2}+\frac{1}{2} \int_0^1 \| \dr(t) \|_{L^2_x}^2 \,dt 
\bigg]=:\mathcal{A} +\mathcal{B}+\mathcal{C}.
\end{align*}
Note that the term $\mathcal{C}$ is positive. 
\noi 
By Proposition \ref{prop:conv:G}, for $\al>1-\frac{1}{2m+2}$, we have that the first term 
$\mathcal{A}$ is uniformly bounded in $N$, so $\mathcal{A}\gtrsim -1$. Lastly, by Lemma \ref{LEM:Dr2:m} and Lemma \ref{LEM:Dr}, for $\al>1-\frac{1}{2m+1}$, we obtain that 
\[
\mathcal{B}\gtrsim -1-\eta\, \mathcal{C}. 
\]
Therefore, for $\al>1-\frac{1}{2m+2}$, we obtain that
\[
\inf_{N, \theta} \W_N(\theta)\gtrsim -1+(1-\eta)\,\mathcal{C}\gtrsim -1.
\]
The uniform (in $N$ and $\theta$) bound above, jointly with \eqref{P3} gives the desired result. 

\end{proof}

\section{First order expansion almost sure local well-posedness}\label{sec:LWP}
In this Section, we discuss the following first result on the almost sure local well-posedness of the defocusing Wick ordered fraction NLW \eqref{WNLW2}. 

\begin{proposition}\label{LWP-Str}
{\rm
Fix $m\in\N$.
For $m=1$, let $\al\in (\frac 56, 1)$.
For $m\geq 2$, let $\al\in \left(1-\frac{1}{2m^2+2m+1}, 1\right)$. 

Then, there exists $\Omega^1\subset \Omega$ with $P(\Omega^1)=1$ and with the property that for each $\omega\in\Omega^1$ there exists $T_\omega>0$ such that 
 \eqref{WNLW2} admits a unique local solution in the class $C_t([0,T];H_x^{-(1-\al)-\eps})$, for any $0<\eps\ll1$.
 More precisely, the solution is of the form 
 \[
u=S(t)(\phi_0^\o, \phi_1^\o)+w\in C_t([0,T]; H_x^{-(1-\al)-\eps}(\T^2)) +X^s(T),\]
where $s\in (0,1)$ and $X^s(T)$ is defined below, in \eqref{def:str}.  
(See the hypotheses of Proposition \ref{LWP-Str-det} and Remark \ref{rem:LWP1} below for the precise conditions on the different indices appearing in the definition of $X^s(T)$.)
}
\end{proposition}

As discussed in the introduction, our first local well-posedness result is based on the first order expansion $u=S(t)(\phi_0^\o, \phi_1^\o)+w=z+w$, where $w$ solves \eqref{wick-NLW}.
With this approach, the almost sure local well-posedness of \eqref{WNLW2} relies on 
proving an almost sure local well-posedness result for \eqref{wick-NLW}.
In view of Duhamel's formulation for \eqref{wick-NLW}, we define the map $\Psi$ as follows: 
\begin{equation}\label{cont-map}
 \Psi(w)(t):=
 \int_0^t \frac{\sin ((t - t') \jb{\nb}^\al)}{\jb{\nb}^\al} 
  :\! (z+w)^{2m+1}\!:  dt'.
\end{equation}

\noi
We recall from the property of the Hermite polynomials \eqref{Herm:sum}, that 
\begin{align*}
 :\! (z+w)^{2m+1}\!:  \, \, 
  = \sum_{\l = 0}^{2m+1}
\begin{pmatrix}
2m +1 \\ \l
\end{pmatrix}
  :\! z^\l\!: w^{2m+1 - \l}.
\end{align*}
Then the map $ \Psi(w)(t)=\Psi^\o_{\{: z^{\l} :,\, \l\in\{1,\dots,2m+1\} \}}(w)(t)$ can be rewritten as
\begin{align}\nonumber
 \Psi(w)(t) &=  \sum_{\l = 0}^{2m+1}\begin{pmatrix}
2m +1 \\ \l
\end{pmatrix}
 \int_0^t \frac{\sin ((t - t') \jb{\nb}^\al)}{\jb{\nb}^\al} \left( :\! z^\l\!: w^{2m+1 - \l} \right)dt'.
 \end{align}

We  prove that
the Cauchy problem \eqref{wick-NLW} admits a solution by showing that the map $\Psi$ admits a fixed point in a suitable space.
%
%
The map $\Psi$ depends on the set of stochastic objects  
\begin{equation}\label{en-data-set}
\Xi^1 :\,=\{:\! z^{\l}\!: \}_{\l\in \{1,\dots,2m+1\}},
\end{equation}
and the existence of a fixed point for $\Psi$ will be shown in a deterministic way. Basically, we decompose the ill-posed (due to the rough initial data) solution-map of \eqref{WNLW2} in two steps:
\begin{equation}\label{fact-sol-map}
\begin{aligned} (u_0^\omega, u_1^\omega)&\xmapsto{{\rm lift}}
\big\{: z^{\l} :,\, \l\in\{1,\dots,2m+1\} \big\}
  \xmapsto{\Psi}  w\in X^s
 \end{aligned}
 \end{equation}
 and so
 \begin{align*} (u_0^\omega, u_1^\omega)
 \longmapsto u=z^\o+w\in C_t([0,T]; H^{-(1-\al)-\eps}_x) +X^s\subset  C_t([0,T]; H^{-(1-\al)-\eps}_x),
 \end{align*}

\noindent
where $X^s$ is a suitable Strichartz space, see \eqref{def:str} below.
In the first lifting step we generate the enhanced data set of stochastic objects defined in \eqref{en-data-set} with low regularity, and in the second we obtained the solution (in the sense specified in the introduction) via a fixed point argument in a deterministic setting.
\medskip 

For the second step, we consider the following deterministic equation:
\begin{align}
\begin{cases}
\dt^2 w+(1 -\Dl)^\alpha  w + w^{2m+1}  \, +  \sum_{\l = 1}^{2m+1}
\begin{pmatrix}
2m +1 \\ \l
\end{pmatrix}
  \Xi^1_\l w^{2m+1 - \l} = 0 \\
(w, \dt w) |_{t = 0} = (0, 0),
\label{wick-NLW-det}
\end{cases}
\end{align}

\noi
where $\Xi^1_\l\in C(\R_+;W^{-\l(1-\al)-\eps}(\T^2))$
for any $\eps>0$ and any $\l=1,2,\dots, 2m+1$.
We set
$\mathcal Z_1: =\{C([0,1];W^{-\l(1-\al)-\eps}(\T^2))\}_{\l=1}^{2m+1}$ and 
\begin{equation*}
\|\Xi^1\|_{\mathcal Z_1}:=\sum_{\l=1}^{2m+1} \|\Xi^1_\l\|_{L^{\infty}([0,1];W^{-\l(1-\al)-\eps}(\T^2))}.
\end{equation*}

\noindent
Using Duhamel's formula, we have that 
\begin{align}
w(t)=  &\int_0^t \frac{\sin ((t - t') \jb{\nb}^\al)}{\jb{\nb}^\al}  \Xi^1_{2m+1}
dt' \label{cont-map1}\\\label{cont-map2}
&+\sum_{\l = 1}^{2m}\begin{pmatrix}
2m +1 \\ \l
\end{pmatrix}
 \int_0^t \frac{\sin ((t - t') \jb{\nb}^\al)}{\jb{\nb}^\al} \left(\Xi^1_\l w^{2m+1 - \l}  \right)dt'\\\label{cont-map3}
&+ \int_0^t \frac{\sin ((t - t') \jb{\nb}^\al)}{\jb{\nb}^\al}  w^{2m+1}.
\end{align}

Let us note that the critical exponent for \eqref{NLW} is $s_{\rm crit}=1-\frac{\al}{m}$. To start with, we fix 
\begin{equation}\label{ch5:cond0}
1>s>s_{\rm crit}=1-\frac{\al}{m}.
\end{equation}
Given $0<T\leq1$, we define the space 
\begin{equation}\label{def:str}
X^s(T):=L^\infty_t([0,T]; H^s_x(\T^2))\cap L^p_t([0,T];W^{s-\gamma_{p,q},q}_x(\T^2))
\end{equation}
where $(p,q)$ is a fractional admissible pair depending on $m$ (see \eqref{frac-ad} and  \eqref{def-gam} for the definitions). 

Then we have the following (deterministic) local well-posedness results for \eqref{wick-NLW-det}.

\begin{proposition}\label{LWP-Str-det}
{\rm
Fix $m\in\N$, $p>2m+1$ and $q\geq 4m+2$. For $m=1$ let $p<5$ and $\al>\frac 56$. 
For $m\geq 2$, let $\al\in\left(1-\frac{\beta}{2m^2+2m+\beta},1\right)$,
where $\beta:=\frac 12+\frac{2m+1}{2p}$. 

Set $\tilde s_1:=(2m+2)\al-(2m+1)$. For $m=1$, let $s\in \left(\max\{2(1-\al), 1-\al\beta\}, \min\{\tilde s_1, 1-\al+\gamma_{p,q}\}\right)$.
For $m\geq 2$, let  $s\in \left(1-\frac{\al \beta}{m}, \min\{\tilde s_1, 1-\al+\gamma_{p,q}\}\right)$.

Then, given an enhanced data set $\Xi^1=(\Xi_1,\Xi_2,\dots, \Xi_{2m+1})\in\mathcal Z_1$,
there exists $T=T(\|\Xi^1\|_{\mathcal Z_1})\in (0,1]$ 
and a unique solution $w$ of \eqref{wick-NLW-det} in $X^s(T)$. 
Furthermore, the map $\Xi^1\in \mathcal Z_1\mapsto w\in X^s(T)$ is continuous.

}
\end{proposition}

\begin{remark}\label{rem:LWP1}
{\rm
Let $m\geq 2$ and $\al\in\left(1-\frac{1}{2m^2+2m+1},1\right)$. Let $q\geq 4m+2$. 
Then,
there exists $\beta=\beta (m,\al)\in(\frac 12,1)$ such that 
$\al\in\left(1-\frac{\beta}{2m^2+2m+\beta},1\right)$. After setting $p=p(m,\al):=\frac{2m+1}{2\beta-1}$, the conclusion of Proposition \ref{LWP-Str-det} still holds (for this choice of $p$ and $q$ and with $s$ and $\Xi^1$ as in Proposition \ref{LWP-Str-det}).
%
}
\end{remark}


\medskip

Proposition \ref{LWP-Str} follows from
Proposition \ref{LWP-Str-det}, Remark \ref{rem:LWP1} and  Proposition \ref{PROP:sto1} (i).
The proof of Proposition \ref{LWP-Str} (in fact, the proof of Proposition \ref{LWP-Str-det}) will be given in the appendix. Indeed, in the next section,  we give a stronger a.s.  local well-posedness result by means of a second order expansion. 


\begin{remark}\rm

Note that the uniqueness of the solution only holds in the Strichartz space $X^s(T)$, and we refer to this kind of local well-posedness result as {\it conditional} (in the remainder $w$). \smallskip
An {\it unconditional} local well-posedness theory can be done by using only Sobolev's embeddings, in the spirit of \cite{GKOT}. Nonetheless, such an approach would give a weaker result in term of the range of $\al$. 
\end{remark}

\section{Second order expansion almost sure local well-posedness}\label{lwp-sec}

We now prove  the a.s. local well-posedness of the renormalized fNLW employing the second order expansion.
Similarly to  the previous section, the functional space we are going to work with is the space $X^s(T):=L^\infty_T H^s_x\cap L^p_TW^{s-\gamma_{p,q},q}_x$ introduced in \eqref{def:str}. We recall that  $(p,q)$ are fractional admissible, see \eqref{frac-ad}, and that $\gamma_{p,q}$ is defined in \eqref{def-gam}. 
\medskip

\begin{proposition}\label{LWP-Str-sec}
{\rm
Fix $m\in\N$. For $m=1$, let $\al\in (\frac 67,1)$. 
For $m\geq 2$, let $\al\in\left(1-\frac{1}{2m^2+m+1}, 1\right)$. 

 Then, there exists $\Omega^2\subset \Omega$ with $P(\Omega^2)=1$ and with the property that for each $\omega\in\Omega^2$ there exists $T_\omega>0$ such that 
 \eqref{WNLW2} admits a unique local solution in the class $C_t([0,T_\omega];H_x^{-(1-\al)-\eps})$, for any $0<\eps\ll1$.
 More precisely, the solution is of the form 
\[
u=z+z_2+w_2\in C_t([0,T_\omega];H_x^{-(1-\al)-\eps}(\T^2))+C_t([0,T_\omega]; W^{-(2m+1)(1-\al)+\al -\eps,\infty}_x(\T^2))+ X^s(T_\omega).
\]

\noi
(See the hypotheses of Proposition \ref{LWP-Str-sec-det} and Remark \ref{LWP-Str-sec-det-rem} below for the precise conditions on the different indices appearing in the definition of $X^s(T_\o)$.)

}
\end{proposition}

\begin{remark}
\rm
In the case of the cubic fNLW, when $m=1$, Proposition \ref{LWP-Str-sec} does not provide an improvement over Proposition \ref{LWP-Str}. While we do see an improvement at the level of the deterministic well-posedness results that each of these two propositions is based on (i.e. the condition $\al>\frac 45$ in Proposition \ref{LWP-Str-sec-det} bellow is less restrictive than the condition $\al>\frac 56$ in Proposition \ref{LWP-Str-det}), in order to obtain Proposition \ref{LWP-Str-sec}, we apply Proposition \ref{PROP:sto1} (iii), 
which requires the further restriction $\al>1-\frac{1}{4m+3}=\frac 67$. Note that,
in the proof of Proposition \ref{LWP-Str}, we only use Proposition \ref{PROP:sto1} (i), which only requires $\al>1-\frac{1}{2m+1}=\frac 23$.

\end{remark}

As in the previous section, we create an enhanced data set and then solve the problem by performing a fixed point argument in a deterministic way.
In this case, we further factorize the ill-posed solution map likely to \eqref{fact-sol-map}, but by constructing a larger enhanced data set consisting of 
\[
:\!z^{\l}\!:  \big(\I(:\!z^{2m+1}\!:)\big)^{2m+1-\l-j}, \quad \l\in\{0,\dots, 2m\}, j\in\{1,\dots, 2m+1-\l\}.
\]

Recalling equation \eqref{wick2-NLW} and Duhamel's formula for this initial value problem, we
consider the following deterministic problem:

\begin{align}
w_2(t) &= \Psi(w_2)(t)\label{eq with Z sec}\\
&:=\sum_{\l=0}^{2m}\sum_{j=0}^{2m+1-\l}b_{m,\l}c_{m,\l,j}\int_0^t \frac{\sin ((t - t') \jb{\nb}^\al)}{\jb{\nb}^\al} \left( :\! Z^\l\! : (\mathcal I(:\! Z^{2m+1}\!:))^{2m+1-\l-j}w_2^j \right)
  dt'\label{cont-map:2ord}
  \\
  \label{cont-map:2ord0}
  &=\sum_{\l=0}^{2m}b_{m,\l}c_{m,\l,0}\int_0^t \frac{\sin ((t - t') \jb{\nb}^\al)}{\jb{\nb}^\al} \left( :\! Z^\l\! : (\mathcal I(:\! Z^{2m+1}\!:))^{2m+1-\l} \right) dt'\\
  &+\sum_{\l=1}^{2m}\sum_{j=1}^{2m+1-\l}b_{m,\l}c_{m,\l,j}\int_0^t \frac{\sin ((t - t') \jb{\nb}^\al)}{\jb{\nb}^\al} \left( :\! Z^\l\! : (\mathcal I(:\! Z^{2m+1}\!:))^{2m+1-\l-j}w_2^j \right) dt' \label{eq:2ord-z} \\\label{eq:2ord-a}
&+\sum_{j=1}^{2m+1}b_{m,0}c_{m,0,j}\int_0^t \frac{\sin ((t - t') \jb{\nb}^\al)}{\jb{\nb}^\al} \left(  (\mathcal I(:\! Z^{2m+1}\!:))^{2m+1-j}w_2^j \right) dt'.
\end{align}

We also introduce the following notations. 
Set 
\begin{equation*}
\Xi^2:=\left\{:\!Z^{k_1}\!:  \big(\I(:\!Z^{2m+1}\!:)\big)^{k_2}, \quad k_1\in\{0,\dots, 2m\}, k_2\in\{0,\dots, 2m+1-k_1\}\right\}
\end{equation*}
and
\begin{equation*}
\|\Xi^2\|_{\mathcal Z_2}:=\sum_{k_1=0}^{2m}\sum_{k_2=0}^{2m+1-k_1}\|:\!Z^{k_1}\!:(\mathcal I(:\!Z^{2m+1}\!:))^{k_2}\|_{C([0,1];W_x^{s_{k_1,k_2},\infty}(\T^2))},
\end{equation*}
where
\begin{align*}
s_{k_1,k_2}:=
\begin{cases}
-k_1(1-\al)-\eps, \quad {\text if} \quad k_1\geq 1,\\
-(2m+1)(1-\al)+\al-\eps, \quad {\text if} \quad k_1=0.
\end{cases}
\end{align*}

Then, Proposition \ref{LWP-Str-sec} follows from the following deterministic local well-posedness result and by applying Proposition \ref{PROP:sto1}.

\begin{proposition}\label{LWP-Str-sec-det}
{\rm
Fix $m\in\N$, $p>2m+1$ and $q\geq 4m+2$. 
For $m=1$ let $\al>\frac 45$. 
For $m\geq 2$, 
let $\al\in\left(1-\frac{\beta}{2m^2+m+\beta},1\right)$,
where $\beta:=\frac 12+\frac{2m+1}{2p}$. 

Set $s_1:=(2m+1)\al-2m$. For $m=1$, let $s\in \left(\max\{2(1-\al), 1-\al\beta\}, \min\{s_1, 1-\al+\gamma_{p,q}\}\right)$.
For $m\geq 2$, let  $s\in \left(1-\frac{\al \beta}{m}, \min\{s_1, 1-\al+\gamma_{p,q}\}\right)$.

Assume that $\Xi^2\in C([0,1];H^{-(1-\al)-\eps}(\T^2))$ is such that $\|\Xi^2\|_{\mathcal Z_2}<\infty$.

Then, there exists $T=T(\|\Xi^2\|_{\mathcal Z_2}) >0$ such that the map $\Psi$ defined in \eqref{cont-map:2ord} admits a unique fixed point in the space $X^s(T)$. 
Consequently, \eqref{eq with Z sec} is locally well-posed in $X^s(T)$. 
}
\end{proposition}

\begin{remark}\label{LWP-Str-sec-det-rem}
{\rm
Let $m\geq 2$ and $\al\in\left(1-\frac{1}{2m^2+m+1},1\right)$.  Let $q\geq 4m+2$.
Then,
there exists $\beta=\beta (m,\al)\in(\frac 12,1)$ such that 
$\al\in\left(1-\frac{\beta}{2m^2+m+\beta},1\right)$. After setting $p=p(m,\al):=\frac{2m+1}{2\beta-1}$, the conclusion of Proposition \ref{LWP-Str-sec-det} still holds (for this choice of $p$ and $q$ and with $s$ and $\Xi^2$ as in Proposition \ref{LWP-Str-sec-det}).
}
\end{remark}

\begin{proof}[Proof of Remark \ref{LWP-Str-sec-det-rem}]
Assume that $\al>1-\frac{1}{2m^2+m+1}$. Then, there exists $0<\eta\ll 1$, such that 
$\al>1-\frac{1}{2m^2+m+1}+\eta $. Then, there exists $\beta=\beta (m,\eta)\in (\frac 12, 1)$ such that 
$1-\frac{1}{2m^2+m+1}+\eta = 1-\frac{\beta}{2m^2+m+\beta}$.
So, $\al\in \left(1-\frac{\beta}{2m^2+m+\beta}, 1\right)$ 
and, after taking $p:=\frac{2m+1}{2\beta-1}$, Proposition \ref{LWP-Str-sec-det} applies.
Notice that when $\eta \searrow 0$, then $\beta \nearrow 1$ and so $p \searrow 2m+1$.
\end{proof}

We next show how Proposition \ref{LWP-Str-sec} follows from Proposition \ref{LWP-Str-sec-det} and Remark \ref{LWP-Str-sec-det-rem} and from Proposition \ref{PROP:sto1}.

\begin{proof}[Proof of Proposition \ref{LWP-Str-sec}]
Applying Proposition \ref{PROP:sto1}(i) with $\l=0,\dots, 2m$, Proposition \ref{PROP:sto1}(ii) with $k=2m+1$ and $k_2=1,\dots, 2m+1$, and Proposition \ref{PROP:sto1}(iii) with $k=2m+1$, $k_1=1,\dots, 2m$ and $k_2=1,\dots, 2m+1-k_1$, it follows that, provided $\al>1-\frac{1}{4m+3}$, there exists a set $\tilde\Omega\subset \Omega$ with $P(\tilde\Omega)=1$ such that for all $\omega\in\tilde \Omega$ we have that 
\begin{equation*}
\|\left\{:\!(z^\o)^{k_1}\!:  \big(\I(:\!(z^\o)^{2m+1}\!:)\big)^{k_2}, \quad k_1\in\{0,\dots, 2m\}, k_2\in\{0,\dots, 2m+1-k_1\}\right\}\|_{\mathcal Z_2}<\infty.
\end{equation*}

Fix $\omega\in\tilde\Omega$. Applying Remark \ref{LWP-Str-sec-det-rem} with $\Xi^2$ replaced by $\left\{:\!(z^\o)^{k_1}\!:  \big(\I(:\!(z^\o)^{2m+1}\!:)\big)^{k_2}, \quad k_1\in\{0,\dots, 2m\}, k_2\in\{0,\dots, 2m+1-k_1\}\right\}$, the result of Proposition \ref{LWP-Str-sec} follows with 
\[T_\omega:=T\left(\left\|\left\{:\!(z^\o)^{k_1}\!:  \big(\I(:\!(z^\o)^{2m+1}\!:)\big)^{k_2}, \quad k_1\in\{0,\dots, 2m\}, k_2\in\{0,\dots, 2m+1-k_1\}\right\}\right\|_{\mathcal Z_2}\right)\]
as long as 
\begin{align*}
\al>\max\Big\{1-\frac{1}{2m^2+m+1},1-\frac{1}{4m+3}\Big\}
=
1-\frac{1}{2m^2+m+1}, \quad \text{if} \quad m\geq 2,\\
\end{align*}
and as long as $\al>\max\{\frac 45, 1-\frac{1}{4m+3}\}=\frac 67$, if $m=1$.
In particular, we note that in the case $m=1$ of a cubic nonlinearity, the condition 
$\al>1-\frac{1}{4m+3}=\frac 67$ needed in order to apply Proposition \ref{PROP:sto1}(iii)
is more restrictive than the condition $\al>\frac 45$ from Proposition \ref{LWP-Str-sec-det}.

\end{proof}

We conclude this section with the proof of Proposition \ref{LWP-Str-sec-det}.

\begin{proof}[Proof of Proposition \ref{LWP-Str-sec-det}]  We prove the  estimates needed to show that the map defined in \eqref{cont-map:2ord} is a contraction on a closed ball of $X^s(T)$ for $T=T(\|\Xi^2\|_{\mathcal Z_2})\leq 1$ sufficiently small. \smallskip

\noi $\bullet$ {\bf Case 1. Estimate on the source term \eqref{cont-map:2ord0}.}
By the Strichartz estimates in Lemma \ref{lemma-str} and the embedding of Lebesgue spaces on finite measure domains, we have
\begin{align*}
\| \eqref{cont-map:2ord0}\|_{X^s(T)}&\lesssim \sum_{\l=0}^{2m}\big\| :\!Z^\l \!: (\mathcal I(:\!Z^{2m+1}\!:))^{2m+1-\l}\|_{L^1_TH^{s-\al}_x}\\
&\lesssim T \sum_{\l=0}^{2m}\| \jb{\nabla}^{s-\al}\left(:\!Z^\l \!: (\mathcal I(:\!Z^{2m+1}\!:))^{2m+1-\l}\right)\big\|_{L^\infty_T L^\infty_x}.
\end{align*}
So, provided that $s-\al<-2m(1-\al)$, that is
\begin{equation}\label{eq:source-2ord}
s<(2m+1)\al-2m= : s_1,
\end{equation}
we have
\begin{equation}\label{eq:a}
\| \eqref{cont-map:2ord0}\|_{X^s}\lesssim T \sum_{\l=0}^{2m}\|:\!Z^\l \!: (\mathcal I(:\!Z^{2m+1}\!:))^{2m+1-\l}\|_{L^\infty_T W^{-\l(1-\al)-\eps,\infty}_x}\lesssim T\|\Xi^2\|_{\mathcal Z_2}.
\end{equation}

\noi Note that conditions \eqref{eq:source-2ord} and \eqref{ch5:cond0} imply 
\begin{equation}\label{eq:first-comp}
\al>1-\frac{1}{2m^2+m+1}
\end{equation}

In conclusion, \eqref{eq:a} with $s_{\rm crit}<s<s_1$ holds 
provided the condition \eqref{eq:first-comp} holds. 

 \medskip
 
\noi $\bullet$ {\bf Case 2. Estimates on \eqref{eq:2ord-z}.} 
Let $\eps>0$ arbitrarily small and set $s_\al:=-(1-\al)-\eps$. 
We start by choosing $\theta_\l, r, \tilde r, \tilde r_1$ satisfying the conditions below. These parameters will appear in the estimate of \eqref{eq:2ord-z}.
\begin{align*}
&\theta_\l:=1-\frac{s+\l s_\al}{\gamma_{p,q}}, \quad 0\leq \theta_\l\leq 1, \quad \textrm{for all} \quad \l=1,2,\dots, 2m, \\
&1<r=r(\l)\leq 2, \quad 1<\tilde r=\tilde r(j,\l)<\infty, \quad 2\leq \tilde r_1=\tilde r_1(\l) \leq q, \\
&\frac{1-(s-\al)}{2}= \frac 1r-\frac{\l s_\al}{2}>\frac{1}{\tilde r}= \frac{1}{\tilde r_1}+\frac{j-1}{q}= \frac{\theta_\l}{2}+\frac{1-\theta_\l}{q}+\frac{j-1}{q}, \\
&\quad \quad \quad \quad \quad \quad \textrm{for all} \quad j=1,2,\dots, 2m+1-\l, \quad  \l=1,2,\dots, 2m.
\end{align*}

\noindent
Let us justify why it is possible to choose $\theta_\l, r, \tilde r, \tilde r_1$ satisfying the above conditions, given our choice for $\al, s, p, q$. 

First, $\theta_\l\leq 1$ for any $\l=1,2,\dots, 2m$ is equivalent to $s\geq -\l s_\al =\l(1-\al)+\l\eps$ for all $\l=1,2,\dots, 2m$. For $m\geq 2$, recall that we have $s>s_{\rm crit}$ and it can be easily verified that $s_{\rm crit}> 2m(1-\al)$ for $\al>1-\frac{1}{2m^2+m+1}$. Therefore, $\theta_\l\leq 1$ holds for $m\geq 2$. For $m=1$, we need $s>2(1-\al)$ to insure that $\theta_\l\leq 1$ for $\l=1,2$. On the other hand, $\theta_\l\geq 0$ for any $\l=1,2,\dots, 2m$ is equivalent to $s\leq \gamma_{p,q}-s_\al$ which clearly holds for the choice $s<1-\al+\gamma_{p,q}$. 

Secondly, in order to choose $r,\tilde r,\tilde r_1$ satisfying the sequence of equalities/inequalities above, it suffices to have that
\begin{equation*}
\frac{1-(s-\al)}{2}>\frac{\theta_\l}{2}+\frac{1-\theta_\l}{q}+\frac{j-1}{q},
\end{equation*}
for all $j=1,2,\dots, 2m+1-\l$ and all $\l=1,2,\dots, 2m$. 
This boils down to
\begin{equation*}
\Sigma_\l:=(1-\theta_\l)\left(\frac 12-\frac 1q\right)-\frac{s-\al}{2}-\frac{2m-\l}{q}>0,
\end{equation*}
for all $\l=1,2,\dots, 2m$.
Using the definition of $\theta_\l$ and the fact that $\frac12-\frac 1q=\frac{\gamma_{p,q}}{2}+\frac{\al}{2p}$, we note that
\begin{equation*}
\Sigma_\l=\frac{\al+\l s_\al}{2}+(s+\l s_\al)\frac{\al}{2p\gamma_{p,q}}-\frac{2m-\l}{q}.
\end{equation*}
We have already seen that $s\geq -\l s_\al$ for all $\l=1,2,\dots, 2m$, so the middle term in the last expression for $\Sigma_\l$ is non-negative. So it is enough to show that
\begin{equation*}
\frac{\al+\l s_\al}{2}-\frac{2m-\l}{q}>0, \quad \textrm{for all} \quad \l=1,2,\dots, 2m. 
\end{equation*}
A calculation shows that
\begin{equation*}
\frac{\al+\l s_\al}{2}-\frac{2m-\l}{q}=\frac{\l+1}{2}\left(\al-(1-f(\l))\right)-\l\eps, 
\end{equation*}
where $f(x):=\frac{2x+q-4m}{q(x+1)}$. Since $q\geq 4m+2$, we have that $f'(x)\leq 0$ for all $x$, so $f$ is a decreasing function. In particular, $f(\l)\geq f(2m)$ for all $\l=1,2,\dots, 2m$. Therefore,
\begin{equation*}
\frac{\al+\l s_\al}{2}-\frac{2m-\l}{q}> \frac{\l+1}{2}\left(\al-(1-f(2m))\right)-\l\eps=\frac{\l+1}{2}\left(\al-\left(1-\frac{1}{2m+1}\right)\right)-\l\eps>0
\end{equation*}
for $\eps$ sufficiently small, since we have $\al>1-\frac{1}{2m^2+m+1}$ for $m\geq 2$ and $\al>\frac 45$ for $m=1$.
In conclusion, we have indeed that $\Sigma_\l>0$.

Note that the condition $0<s<s_1$ allows us to choose $\eps>0$ sufficiently small
to insure that $0\leq \l s_\al-(s-\al)<1$ holds. Then the equality that defines $r$, $\frac 1r=\frac 12+\frac{\l s_\al-(s-\al)}{2}$, implies that $r\in (1,2]$.

We now proceed to estimating \eqref{eq:2ord-z}. Using the Strichartz estimates in Lemma \ref{lemma-str} and the dual Sobolev embedding $L^r(\T^2)\subset H^{s-\al-\l s_\al}(\T^2)$, we obtain
\begin{align*}
\|\eqref{eq:2ord-z}\|_{X^s}&\lesssim \sum_{\l=1}^{2m}\sum_{j=1}^{2m+1-\l} \|:\!Z^{\l}\!:(\mathcal I(:\!Z^{2m+1}\!:))^{2m+1-\l-j} w_2^j\|_{L^1_tH^{s-\al}_x}\\
&=\sum_{\l=1}^{2m}\sum_{j=1}^{2m+1-\l} \|\jb{\nabla}^{\l s_\alpha}\big(:\!Z^{\l}\!:(\mathcal I(:\!Z^{2m+1}\!:))^{2m+1-\l-j} w_2^j\big)\|_{L^1_tH^{s-\al-\l s_\al}_x}\\
&\lesssim \sum_{\l=1}^{2m}\sum_{j=1}^{2m+1-\l} \|\jb{\nabla}^{\l s_\al}\left( :\!Z^{\l}\!:(\mathcal I(:\!Z^{2m+1}\!:))^{2m+1-\l-j} w_2^j\right)\|_{L^1_t L^{r}_x}
\end{align*}
As noted above, the condition $s<s_1$ allows us to choose $\eps>0$ sufficiently small
to insure that $s-\al-\l s_\al\leq 0$ holds for all $\ell=1,2,\dots, 2m$. This is needed for applying the dual Sobolev embedding.

Then, by Lemma \ref{LEM:prod} (ii) with $1<\tilde r<\infty$ such that $\frac{1}{\tilde r}<\frac 1r-\frac{\l s_\al}{2}$, 
we have:
\begin{align*}
\|\eqref{eq:2ord-z}\|_{X^s}
&\lesssim \sum_{\l=1}^{2m}\sum_{j=1}^{2m+1-\l} \|\jb{\nabla}^{\l s_\al}\left( :\!Z^{\l}\!:(\mathcal I(:\!Z^{2m+1}\!:))^{2m+1-\l-j}\right)\|_{L^{\infty}_TL^\infty_x} \|\jb{\nabla}^{-\l s_\al}\left(w_2^j\right)\|_{L^1_t L^{\tilde r}_x}\\
& \lesssim \|\Xi^2\|_{\mathcal Z_2} \sum_{\l=1}^{2m}\sum_{j=1}^{2m+1-\l} \|\jb{\nabla}^{-\l s_\al}\left(w_2^j\right)\|_{L^1_t L^{\tilde r}_x}.
\end{align*}
Then, by Lemma \ref{LEM:prod} (i) with $1<\tilde r_1<\infty$, $\frac{1}{\tilde r}= \frac{1}{\tilde r_1}+\frac{j-1}{q}$ and by Holder's inequality with $a=\frac{p}{1-\theta_\l}$, if $\theta_\l\neq 1$ and $a=\infty$ if $\theta_\l=1$, we have 
\begin{align*}
\|\eqref{eq:2ord-z}\|_{X^s}
& \lesssim \|\Xi^2\|_{\mathcal Z_2}\sum_{\l=1}^{2m}\sum_{j=1}^{2m+1-\l} T^{1-\frac{j-\theta_\l}{p}}\|\jb{\nabla}^{-\l s_\al}w_2\|_{L^a_T L^{\tilde r_1}_x}\|w_2\|^{j-1}_{L^p_TL^{q}_x}.
\end{align*}
Our definition of $\theta_\l$ guarantees that
\begin{equation*}
 -\l s_\al=\theta_\l s+(1-\theta) (s-\gamma_{p,q}), 
\end{equation*}
and so with $1<\tilde r_1<\infty$ such that
$\frac{1}{\tilde r_1}=\frac{\theta_\l}{2}+\frac{1-\theta_\l}{q},$
we get using the interpolation of Sobolev spaces that:
\begin{align}
\|\eqref{eq:2ord-z}\|_{X^s}
& \lesssim \|\Xi^2\|_{\mathcal Z_2}\sum_{\l=1}^{2m}\sum_{j=1}^{2m+1-\l} T^{1-\frac{j-\theta_\l}{p}}\
\|w_2\|_{L^\infty_TH^s_x}^{\theta_\l} \|w_2\|^{(1-\theta_\l)+j-1}_{L^p_TW^{s-\gamma_{p,q},q}_x}\notag\\
&\lesssim \|\Xi^2\|_{\mathcal Z_2}T^{a} (\|w_2\|_{X^s(T)}+\dots + \|w_2\|_{X^s(T)}^{2m}),\label{eq:b}
\end{align}
where $a:=1-\frac{2m}{p}+\frac{\eps}{p\gamma_{p,q}}>0$ since $p\geq 2m$.
Note that, in order to interpolate the Sobolev space $W^{-\l s_\al, r_1}(\T^2)$ between $W^{s-\gamma_{p,q}, q}_x(\T^2)$ and $H^s(\T^2)$, we need 
\begin{equation*}
s-\gamma_{p,q}\leq -\l s_\al \leq s, \quad \textrm{ for all } \l=1,2,\dots, m.
\end{equation*}
This leads to $s>2m(1-\al)$
and to $s\leq 1-\al+\gamma_{p,q}+\eps$ which, as we have seen, are both satisfied. 

In conclusion, \eqref{eq:b} holds provided the following are satisfied: $p\geq 2m$, $q\geq 4m+2$,
and $\al>1-\frac{1}{2m^2+m+1}$, $s\in\left(s_{\rm crit}, \min\{s_1,1-\al+\gamma_{p,q}\}\right)$ for $m\geq 2$ and $\al>\frac 45$, $s\in\left(2(1-\al),\min\{s_1,1-\al+\gamma_{p,q}\}\right)$ for $m=1$.

\medskip
$\bullet$ {\bf Case 3. Estimate on \eqref{eq:2ord-a}.} For $\al>\frac 45$ if $m=1$ and $\al>1-\frac{1}{2m^2+m+1}$ if $m\geq 2$, $p>2m+1$ and $s$ satisfying
\begin{equation}\label{eq: cond s in last case}
1-\frac{\al\beta}{m}\leq s\leq\al,
\end{equation}
with $\beta:=\frac 12+\frac{2m+1}{2p} \in (\frac 12,1)$,
we choose $r_2\in [\frac{2}{1+\al-s}, 2]$ satisfying
\begin{equation*}
\frac{1+\al-s}{2}\geq\frac{1}{r_2}\geq \frac{2m+1}{q}-(2m+1)\frac{s-\gamma_{p,q}}{2}.
\end{equation*}

Using the Strichartz estimates in Lemma \ref{lemma-str} and the dual Sobolev's embedding $L^{r_2}(\T^2)\subset H^{s-\al}(\T^2)$, we have 
\begin{equation*}
\|\eqref{eq:2ord-a}\|_{X^s(T)}\lesssim  \sum_{j=1}^{2m+1}\|(\mathcal I(:\!Z^{2m+1}\!:))^{2m+1-j}w_2^j\| _{L^1_TH^{s-\al}_x}\lesssim \sum_{j=1}^{2m+1} \|(\mathcal I(:\!Z^{2m+1}\!:))^{2m+1-j}w_2^j\|_{L^1_TL^{r_2}_x}.
\end{equation*}
Since $(\mathcal I(:\!Z^{2m+1}\!:))^{2m+1-j}\in L^\infty_t W^{-(2m+1)(1-\al)+\al-\eps,\infty}_x$ for all $j=1,2,\dots,2m+1$, and by observing that $-(2m+1)(1-\al)+\al>0$, we then have
\begin{align}
\|\eqref{eq:2ord-a}\|_{X^s(T)}&\lesssim \sum_{j=1}^{2m+1} \|(\mathcal I(:\!Z^{2m+1}\!:))^{2m+1-j}\|_{L^\infty_T L^\infty_x}\|w_2\|^j_{L^{2m+1}_TL^{(2m+1)r_2}_x}\notag\\
&\lesssim T^b \sum_{j=1}^{2m+1} \|(\mathcal I(:\!Z^{2m+1}\!:))^{2m+1-j}\|_{L^\infty_T L^\infty_x}\|w_2\|^j_{L^{p}_TW^{s-\gamma_{p,q},q}_x}\notag\\
&\lesssim \|\Xi^2\|_{\mathcal Z_2}T^b \left(\|w_2\|_{L^{p}_TW^{s-\gamma_{p,q},q}_x}+\|w_2\|_{L^{p}_TW^{s-\gamma_{p,q},q}_x}^{2m+1}\right),\label{eq:c}
\end{align}
where $b>0$ and where in the second step we used the Sobolev embedding $ W^{s-\gamma_{p,q},q}(\T^2)\subset L^{(2m+1)r_2}(\T^2)$ and $p>2m+1$.
\medskip

Therefore, \eqref{eq:c} holds provided that $\al>1-\frac{1}{2m^2+m+1}$, $p>2m+1$ and provided that $s$ satisfies \eqref{eq: cond s in last case}.
Note that $\alpha>s_1$, so the condition $s\leq \alpha$ is satisfied as soon as we impose $s<s_1$. 
Note also that the condition $s<s_1$ is required when estimating \eqref{cont-map:2ord0} and \eqref{eq:2ord-z}. Combining $s<s_1$ and \eqref{eq: cond s in last case}, implies that $\alpha>1-\frac{\beta}{2m^2+m+\beta}$, which is exactly the hypothesis on $\alpha$ in Proposition \ref{LWP-Str-sec-det} in the case $m\geq 2$. 

In conclusion, by \eqref{eq:a}, \eqref{eq:b} and \eqref{eq:c}, we have that
\begin{align*}
\|w_2\|_{X^s(T)}\leq &\bigg(C_1T+C_2T^a\left(\|w_2\|_{X^s(T)}+\dots+\|w_2\|_{X^s(T)}^{2m}\right)\\
&+C_3T^b\left(\|w_2\|_{X^s(T)}+\dots+\|w_2\|_{X^s(T)}^{2m+1}\right)\bigg)\|\Xi^2\|_{\mathcal Z_2},
\end{align*}
for some $C_1,C_2,C_3>0$. By taking $R:=3C_1\|\Xi^2\|_{\mathcal Z_2}$ and $T\leq 1$ such that $C_2T^a\|\Xi^2\|_{\mathcal Z_2}(1+R+\dots +R^{2m-1})\leq \frac 13$ and $C_3T^b\|\Xi^2\|_{\mathcal Z_2}(1+R+\dots +R^{2m})\leq \frac 13$, we get that  the map $\Psi$ defined in \eqref{cont-map:2ord} maps the ball $B(0,R)$ in $X^s(T)$ to itself. Similarly, it can be shown that $\Psi$ is a contraction on $B(0,R)$.

\end{proof}

\appendix

\section{Proof of Proposition \ref{LWP-Str}}
\noi By recalling that the map \eqref{cont-map} can be decomposed as the sum
\[
\eqref{cont-map}=\eqref{cont-map1}+\eqref{cont-map2}+\eqref{cont-map3}
\]
we distinguish three cases. 

\medskip

\noi
$\bullet$  {\bf Case 1. Estimate on term \eqref{cont-map1}.}

As in the proof of Proposition \ref{LWP-Str-sec}, Case 1, by the Strichartz estimates in Lemma \ref{lemma-str} and the embedding of Lebesgue spaces on finite measure domains, we have
\begin{align*}
\|\eqref{cont-map1}\|_{X^{s}}&\lesssim \|\Xi^1_{2m+1}\|_{L^1_t H^{s-\al}_x}\lesssim T\|\jb{\nb}^{s-\al}\Xi^1_{2m+1}\|_{L^\infty_{t}L^\infty_x}\\
&\lesssim T\|\jb{\nb}^{-(2m+1)(1-\al)-\eps}\Xi^1_{2m+1}\|_{L^\infty_{t}L^\infty_x}\lesssim T\|\Xi^1\|_{\mathcal Z_1},
\end{align*}
provided that
\begin{equation}\label{ch5:cond3}
s-\al<-(2m+1)(1-\al) \iff s<\al(2m+2)-(2m+1):=\tilde s_1.
\end{equation}

Note that condition \eqref{ch5:cond3} and condition $s>s_{\rm crit}$ in \eqref{ch5:cond0} imply
that $\al>1-\frac{1}{2m^2+2m+1}$.
We remark here that \eqref{ch5:cond3} is a stronger assumption than \eqref{eq:source-2ord}, so \eqref{ch5:cond3} is responsible for the narrower range of $\al$ in the first order expansion well-posedness theory (in the case $m\geq 2$) compared to the second order expansion well-posedness theory. 

\medskip

\noi
$\bullet$  {\bf Case 2. Estimate on \eqref{cont-map2}.}
In this case we argue exactly as in Case 2 in the proof of Proposition \ref{LWP-Str-sec}. 
More precisely, we notice that \eqref{cont-map2} has the same form as \eqref{eq:2ord-z} in which we fix $j=2m+1-\ell$.
As in the proof of Proposition \ref{LWP-Str-sec}, in addition to the assumptions on $\al$ and $s$ in Case 1, in Case 2 we further need to impose that $s<1-\al+\gamma_{p,q}$ and that, for $m=1$, $s>2(1-\al)$ (this insures that $\theta_\ell\leq 1$). 
Note that, for $m=1$, by combining $s>2(1-\al)$ with the condition $s<\tilde s_1$ in \eqref{ch5:cond3}, 
we get the restriction $\al>\frac 56$. 

\medskip
\noi
$\bullet$  {\bf Case 3. Estimate on \eqref{cont-map3}.}
In this case we argue exactly as in Case 3 in the proof of Proposition \ref{LWP-Str-sec}. 
More precisely, we notice that \eqref{cont-map3} has the same form as \eqref{eq:2ord-a} in which we fix $j=2m+1$. 
As in the proof of Proposition \ref{LWP-Str-sec}, in Case 3 we further need to impose that 
$s>1-\frac{\al\beta}{m}$, where $\beta:=\frac 12+\frac{2m+1}{2p}$. Combining this with the condition $s<\tilde s_1$ in \eqref{ch5:cond3}, leads to 
$\alpha>1-\frac{\beta}{2m^2+2m+\beta}$, which is exactly the hypothesis on $\alpha$ in Proposition \ref{LWP-Str} in the case $m\geq 2$.
For $m=1$, the condition $\alpha>\frac 56$ is more restrictive than the above mentioned condition on $\alpha$ (since $p<5$ and so $\beta>\frac 45$).

\begin{ackno}\rm

The authors would like to thank Tadahiro Oh and Younes Zine for helpful discussions. The authors are grateful to Nikolay Tzvetkov for a remark on an earlier version of the paper which allowed them to relax some of the hypotheses in the results.
L.F. and O.P. were supported by 
the EPSRC New Investigator Award 
(grant no. EP/S033157/1).
\end{ackno}

\begin{bibdiv}
\begin{biblist}

\bib{BG}{article}{
   author={Barashkov, N.},
   author={Gubinelli, M.},
   title={A variational method for  $\Phi^4_3$},
   journal={Duke Math. J.},
   volume={169},
   date={2020},
   number={17},
   pages={3339--3415},
   issn={0012-7094},
}

\bib{BenOh}{article}{
   author={B\'{e}nyi, \'{A}.},
   author={Oh, T.},
   title={ Modulation spaces, Wiener amalgam spaces, and Brownian motions},
   journal={Adv. Math.},
   volume={228},
   date={2011},
   number={5},
   pages={2943--2981},
   issn={0001-8708},
}

\bibitem{BOZ} \'{A}. B\'{e}nyi, T. Oh, T. Zhao, {\it Fractional Leibniz rule on the torus}, Proc. Amer. Math. Soc. 153 (2025), no. 1, 207-221. 
		
\bib{Bog}{book}{
   author={Bogachev, V. I.},
   title={Gaussian measures},
   series={Mathematical Surveys and Monographs},
   volume={62},
   publisher={Amer. Math. Soc., Providence, RI},
   date={1998},
   pages={xii+433},
   isbn={0-8218-1054-5},
}

\bib{BD}{article}{
   author={Bou\'{e}, M.},
   author={Dupuis, P.},
   title={A variational representation for certain functionals of Brownian
   motion},
   journal={Ann. Probab.},
   volume={26},
   date={1998},
   number={4},
   pages={1641--1659},
   issn={0091-1798},
}

\bib{Bou-CMP94}{article}{
   author={Bourgain, J.},
   title={Periodic nonlinear Schr\"{o}dinger equation and invariant measures},
   journal={Comm. Math. Phys.},
   volume={166},
   date={1994},
   number={1},
   pages={1--26},
   issn={0010-3616},
}

\bib{Bou-CMP96}{article}{
   author={Bourgain, {J.}},
   title={Invariant measures for the $2$D-defocusing nonlinear Schr\"{o}dinger
   equation},
   journal={Comm. Math. Phys.},
   volume={176},
   date={1996},
   number={2},
   pages={421--445},
   issn={0010-3616},
}

\bib{BringmanI}{article}{
   author={Bringman, B.},
    title={Invariant Gibbs measures for the three-dimensional wave equation with a Hartree nonlinearity I: Measures}, 
  journal={Stoch. Partial Differ. Equ. Anal. Comput.},
  volume={10},
   date={2022},
   number={1},
   pages={1--89},
}

\bib{BringmanII}{article}{
   author={Bringman, B.},
    title={Invariant Gibbs measures for the three-dimensional wave equation with a Hartree nonlinearity II: Dynamics}, 
  journal={J. Eur. Math. Soc.},
  volume={26},
   date={2024},
   number={6},
   pages={1933--2098},
  
  }
  
  \bib{BDNY}{article}{
   author={Bringman, B.},
     author={Deng, Y.},
       author={Nahmod, A.},
         author={Yue, H.},
    title={Invariant Gibbs measures for the three dimensional cubic nonlinear wave equation}, 
  journal={Invent. Math.},
  volume={236},
   date={2024},
   pages={1133--1411},

}

\bib{BurqTz}{article}{
   author={Burq, N.},
   author={Tzvetkov, N.},
   title={Random data Cauchy theory for supercritical wave equations. I.
   Local theory},
   journal={Invent. Math.},
   volume={173},
   date={2008},
   number={3},
   pages={449--475},
   issn={0020-9910},
}

\bib{DPD}{article}{
   author={Da Prato, {G.}},
   author={Debussche, A.},
   title={Two-dimensional Navier-Stokes equations driven by a space-time
   white noise},
   journal={J. Funct. Anal.},
   volume={196},
   date={2002},
   number={1},
   pages={180--210},
   issn={0022-1236},
}		
						
\bib{DPT}{article}{
   author={Da Prato, G.},
   author={Tubaro, L.},
   title={Wick powers in stochastic PDEs: an introduction},
   journal={Technical Report UTM, 2006},
   date={2006},
   pages={39 pp}
}

\bib{VDD-JDE}{article}{
   author={Dinh, V. D.},
   title={Strichartz estimates for the fractional Schr\"{o}dinger and wave
   equations on compact manifolds without boundary},
   journal={J. Differential Equations},
   volume={263},
   date={2017},
   number={12},
   pages={8804--8837},
   issn={0022-0396},
}

\bib{FV}{book}{
   author={Friz, P. K.},
   author={Victoir, N. B.},
   title={Multidimensional stochastic processes as rough paths},
   series={Cambridge Studies in Advanced Mathematics},
   volume={120},
   note={Theory and applications},
   publisher={Cambridge University Press, Cambridge},
   date={2010},
   pages={xiv+656},
   isbn={978-0-521-87607-0},
}
		
\bib{GJ}{book}{
   author={Glimm, J.},
   author={Jaffe, A.},
   title={Quantum physics},
   edition={2},
   note={A functional integral point of view},
   publisher={Springer-Verlag, New York},
   date={1987},
   pages={xxii+535},
   isbn={0-387-96476-2},
}

\bib{GIP}{article}{
   author={Gubinelli, M.},
   author={Imkeller, P.},
   author={Perkowski, N.},
   title={Paracontrolled distributions and singular PDEs},
   journal={Forum Math. Pi},
   volume={3},
   date={2015},
   number={e6},
   
   }

\bib{GKO}{article}{
   author={Gubinelli, M.},
   author={Koch, H.},
   author={Oh, T.},
   title={Renormalization of the two-dimensional stochastic nonlinear wave
   equations},
   journal={Trans. Amer. Math. Soc.},
   volume={370},
   date={2018},
   number={10},
   pages={7335--7359},
   issn={0002-9947},
}

\bib{GKO2}{article}{
   author={Gubinelli, {M.}},
   author={Koch, H.},
   author={Oh, T.},
   title={Paracontrolled approach to the
three-dimensional stochastic nonlinear wave equation
with quadratic nonlinearity},
   journal={J. Eur. Math.Soc.},
   volume={26},
   date={2024},
   number={3},
   pages={817--874},
}

 \bib{GKOT}{article}{
   author={Gubinelli, M.},
   author={Koch, H.},
   author={Oh, T.},
   author={Tolomeo, L.},
   title={Global dynamics for the two-dimensional stochastic nonlinear wave
   equations},
   journal={Int. Math. Res. Not. IMRN},
   date={2022},
   number={21},
   pages={16954--16999},
   issn={1073-7928},
}

\bib{GOTW}{article}{
   author={Gunaratnam, T.},
   author={Oh, T.},
   author={Tzvetkov, N.},
   author={Weber, H.},
   title={Quasi-invariant Gaussian measures for the nonlinear wave equation
   in three dimensions},
   journal={Probab. Math. Phys.},
   volume={3},
   date={2022},
   number={2},
   pages={343--379},
   issn={2690-0998},
}

\bib{LiWa}{article}{
   author={Liang, R.},
   author={Wang, Y.},
   title={Gibbs measure for the focusing fractional NLS on the torus},
   journal={SIAM J. Math. Anal.},
   volume={54},
   date={2022},
   number={6},
   pages={6096--6118},
   issn={0036-1410},
}

\bib{McKean}{article}{
   author={McKean, H. P.},
   title={Statistical mechanics of nonlinear wave equations. IV. Cubic
   Schr\"{o}dinger},
   journal={Comm. Math. Phys.},
   volume={168},
   date={1995},
   number={3},
   pages={479--491},
   issn={0010-3616},
}		

\bib{MWX}{article}{
   author={Mourrat, J.-C.},
   author={Weber, H.},
   author={Xu, W.},
   title={Construction of $\Phi^4_3$ diagrams for pedestrians},
   conference={
      title={From particle systems to partial differential equations},
   },
   book={
      series={Springer Proc. Math. Stat.},
      volume={209},
      publisher={Springer, Cham},
   },
   date={2017},
   pages={1--46},
}

\bib{Nelson}{article}{
   author={Nelson, E.},
   title={A quartic interaction in two dimensions},
   conference={
      title={Mathematical Theory of Elementary Particles},
      address={Proc. Conf., Dedham, Mass.},
      date={1965},
   },
   book={
      publisher={M.I.T. Press, Cambridge, Mass.},
   },
   date={1966},
   pages={69--73},
}

\bib{OhOk}{article}{
   author={Oh, {T.}},
   author={Okamoto, {M.}},
   title={Comparing the stochastic nonlinear wave and heat equations: a case
   study},
   journal={Electron. J. Probab.},
   volume={26},
   date={2021},
   pages={Paper No. 9, 44},
}

 \bib{OOT}{article}{
   author={Oh, {T.}},
   author={Okamoto, {M.}},
   author={Tolomeo, L.},
   title={Focusing $\Phi_3^4$-model with a Hartree-type nonlinearity},
   journal={Mem. Amer. Math. Soc.},
   volume={204},
   date={2024},
    number={1529},
   }

 \bib{OOT2}{article}{
   author={Oh, {T.}},
   author={Okamoto, {M.}},
   author={Tolomeo, L.},
   title={Stochastic quantization of the $\Phi_3^3$-model},
   journal={Mem. Eur. Math. Soc.},
   volume={16},
   date={2025},
   }

\bib{OOTz}{article}{
   author={Oh, T.},
   author={Okamoto, M.},
   author={Tzvetkov, N.},
   title={Uniqueness and non-uniqueness of the Gaussian free field evolution under the two-dimensional Wick ordered cubic wave equation},
   journal={Ann. Inst. Henri Poincar\'e Probab. Stat.},
    volume={60},
   date={2024},
   number={3},
   pages={1684-1728},
}

\bib{OQ}{article}{
   author={Oh, T.},
   author={Quastel, J.},
   title={On the Cameron-Martin theorem and almost-sure global existence},
   journal={Proc. Edinb. Math. Soc. (2)},
   volume={59},
   date={2016},
   number={2},
   pages={483--501},
   issn={0013-0915},
}

\bib{OPT}{article}{
   author={Oh, T.},
   author={Pocovnicu, O.},
   author={Tzvetkov, N.},
   title={  Probabilistic local Cauchy theory of the cubic nonlinear wave
   equation in negative Sobolev spaces},
   journal={Ann. Inst. Fourier (Grenoble)},
   volume={72},
   date={2022},
   number={2},
   pages={771--830},
   issn={0373-0956},
}
 
\bib{OST2020}{article}{
author={Oh, T.},
author={Seong, K.},
author={Tolomeo, L.},
title = {A remark on Gibbs measures with log-correlated
Gaussian fields},
journal={Forum Math. Sigma},
 volume={12},
date={2024},
number={e50},
}		

\bibitem{ORT}  T. Oh, G. Richards, L. Thomann, 
{\it On invariant Gibbs measures for the generalized KdV equations}, Dyn. Partial Differ. Equ. 13 (2016), no. 2, 133-153. 

\bib{ROT}{article}{
   author={Oh, T.},
   author={Robert, T.},
   author={Tzvetkov, N.},
   title={Stochastic nonlinear wave dynamics on compact surfaces},
   journal={Ann. H. Lebesgue},
   volume={6},
   date={2023},
   number={6},
   pages={161--223},
}

\bib{OT2018}{article}{
   author={Oh, {T.}},
   author={Thomann, L.},
   title={A pedestrian approach to the invariant Gibbs measures for the
   2-$d$ defocusing nonlinear Schr\"{o}dinger equations},
   journal={Stoch. Partial Differ. Equ. Anal. Comput.},
   volume={6},
   date={2018},
   number={3},
   pages={397--445},
   issn={2194-0401},
}

\bib{OT2020}{article}{
   author={Oh, T.},
   author={Thomann, L.},
   title={Invariant Gibbs measures for the $2$-$d$ defocusing nonlinear wave
   equations},
   journal={Ann. Fac. Sci. Toulouse Math. (6)},
   volume={29},
   date={2020},
   number={1},
   pages={1--26},
   issn={0240-2963},
}

\bib{OWZ}{article}{
   author={Oh, T.},
   author={Wang, Y.},
   author={Zine, Y.},
   title={Three-dimensional stochastic cubic nonlinear wave equation with
   almost space-time white noise},
   journal={Stoch. Partial Differ. Equ. Anal. Comput.},
   volume={10},
   date={2022},
   number={3},
   pages={898--963},
   issn={2194-0401},
}

\bib{Oh-Zine2022}{article}{
   author={Oh, T.},
   author={Zine, Y.},
   title={A note on products of stochastic objects},
   journal={Kyoto J. Math, to appear},
}

\bib{Tz2008}{article}{
   author={Tzvetkov, N.},
   title={Invariant measures for the defocusing nonlinear Schr\"{o}dinger
   equation},
   journal={Ann. Inst. Fourier (Grenoble)},
   volume={58},
   date={2008},
   number={7},
   pages={2543--2604},
   issn={0373-0956},
}
		
\bib{Tz2010}{article}{
   author={Tzvetkov, {N.}},
   title={Construction of a Gibbs measure associated to the periodic
   Benjamin-Ono equation},
   journal={Probab. Theory Related Fields},
   volume={146},
   date={2010},
   number={3-4},
   pages={481--514},
   issn={0178-8051},
}

\bib{Shige}{book}{
   author={Shigekawa, I.},
   title={Stochastic analysis},
   series={Translations of Mathematical Monographs},
   volume={224},
   note={Translated from the 1998 Japanese original by the author;
   Iwanami Series in Modern Mathematics},
   publisher={American Mathematical Society, Providence, RI},
   date={2004},
   pages={xii+182},
   isbn={0-8218-2626-3},
}

\bib{Simon}{book}{
   author={Simon, B.},
   title={The $P(\phi )_{2}$ Euclidean (quantum) field theory},
   note={Princeton Series in Physics},
   publisher={Princeton University Press, Princeton, N.J.},
   date={1974},
   pages={xx+392}
}

\bib{STX}{article}{
   author={Sun, C.},
   author={Tzvetkov, N.},
   author={Xu, W.},
   title={Weak universality results for a class of
nonlinear wave equations},
   journal={arXiv:2206.05945 [math.AP], to appear in Ann. Inst. Fourier},
   date={2022}
}

\bib{Ustunel}{article}{
   author={\"{U}st\"{u}nel, A. S.},
   title={Variational calculation of Laplace transforms via entropy on
   Wiener space and applications},
   journal={J. Funct. Anal.},
   volume={267},
   date={2014},
   number={8},
   pages={3058--3083},
   issn={0022-1236}
}

\end{biblist}
\end{bibdiv}

\end{document}